\theoremstyle{plain} \numberwithin{equation}{section}
\newtheorem{theorem}{Theorem}[section]
\newtheorem{corollary}[theorem]{Corollary}
\newtheorem{lemma}[theorem]{Lemma}
\newtheorem{proposition}[theorem]{Proposition}
\theoremstyle{definition}
\newtheorem{definition}[theorem]{Definition}
\newtheorem{example}[theorem]{Example}
\DeclareMathOperator{\support}{supp } 
\DeclareMathOperator{\LCM}{LCM}
\DeclareMathOperator{\mi}{mi}
\DeclareMathOperator{\atoms}{atoms}
\DeclareMathOperator{\lcm}{lcm}
\DeclareMathOperator{\equivSet}{equiv}
\DeclareMathOperator{\Scarf}{scarf }
\title {Finite atomic lattices and resolutions of monomial ideals}
\author{Sonja Mapes}
\address{Department of Mathematics, Duke University,
117 Physics Bldg,
Box 90320
Durham, NC 27708 
}
\email{smapes@math.duke.edu}
\begin{document}
\begin{abstract}
In this paper we primarily study monomial ideals and their minimal free resolutions by studying their associated LCM lattices.  In particular, we formally define the notion of coordinatizing a finite atomic lattice $P$ to produce a monomial ideal whose LCM lattice is $P$, and we give a complete characterization of all such coordinatizations.  We prove that all relations in the lattice $\mathcal{L}(n)$ of all finite atomic lattices with $n$ ordered atoms can be realized as deformations of exponents of monomial ideals.  We also give structural results for $\mathcal{L}(n)$.  Moreover, we prove that the cellular structure of a minimal free resolution of a monomial ideal $M$ can be extended to minimal resolutions of certain monomial ideals whose LCM lattices are greater than that of $M$ in $\mathcal{L}(n)$.
\end{abstract}
\maketitle

\section{Introduction}
Let $M$ be a monomial ideal in a polynomial ring $R$.  We are interested in studying the minimal free resolution of $R/M$, and specifically understanding the maps in this resolution.  Our approach is heavily dependent on using the combinatorial structure of the lattice of least common multiples, or LCM lattice, associated to $M$, as well as the set of all such lattices for monomial ideals with a fixed number of generators.

The LCM lattice was introduced by Gasharov, Peeva, and Welker for the purpose of studying free resolutions of monomial ideals \cite{GPW}.  In particular they show that two monomial ideals with isomoprhic LCM lattices have isomorphic minimal resolutions.  This motivated Phan to ask, and answer in the affirmative, the question of whether or not every finite atomic lattice can be realized as the LCM lattice of some monomial ideal \cite{phan}.  Phan's work leads to the point of view that finite atomic lattices are ``abstract monomial ideals''.  

This paper concerns itself with two main types of results:  to give a complete characterization of how abstract monomial ideals are coordinatized;  and to use the set of all finite atomic lattices with a fixed number of atoms to understand families of resolutions of monomial ideals.  Theorem \ref{coordinatizations} gives a complete characterization of how to \emph{coordinatize} a finite atomic lattice by associating a monomial ideal $M_P$ to a finite atomic lattice $P$ such that the LCM lattice of $M_P$ is isomorphic to $P$.  Moreover, Proposition \ref{realizable} shows that all monomial ideals can be realized as coordinatizations of their LCM lattices.  

Theorem \ref{Ln} is a restatement of Phan's result which shows that the set $\mathcal{L}(n)$ of all finite atomic lattices with $n$ ordered atoms is itself a finite atomic lattice \cite{phan}, and this idea is central to the rest of our work.  Our interest in studying $\mathcal{L}(n)$ is due to the fact that in this setting, total Betti numbers are weakly increasing as one travels up chains in $\mathcal{L}(n)$ (see Theorem 3.3 in \cite{GPW}).  This is related to lower semi-continuity of Betti numbers under deforming the exponents of the monomial ideal.  A {\it deformation of the exponents} of a monomial ideal is a process by which one replaces a given monomial ideal $M$ with a ``deformed'' monomial ideal $M_{\epsilon}$, where one or more ties between exponents on minimal generators is broken.

Bayer, Peeva and Sturmfels show that the minimal resolution of $M_{\epsilon}$ provides a (not necessarily minimal) resolution of $M$ (see Theorem 4.3 in \cite{bps}).  The main result of Section~\ref{defoSection} is Theorem~\ref{chains=defos}, which states that every relation in $\mathcal{L}(n)$ can be realized as a deformation of exponents for some appropriate choice of coordinatization.

Ultimately we are interested in understanding questions such as:  how can we control deformations of the exponents so that Betti numbers do not increase; and what can we say about the cellular structure of minimal resolutions depending on the location in $\mathcal{L}(n)$?  In the remainder of this paper we focus on answering the latter problem as well as providing structural results that will be useful for considering either question.

Both Theorem 5.3 in \cite{velascoFrames}, and Proposition 5.15 in \cite{phan} describe simplicial complexes which support a minimal free resolution of a monomial ideal by virtue of being their Scarf complex.  Proposition \ref{generalizationScarf} generalizes this result by indicating a class of regular CW-complexes which supports the minimal free resolution of an ideal.  Additionally it is known that if $P > Q$ in $\mathcal{L}(n)$ then any minimal resolution of $P$ is a resolution of $Q$ (see Theorem 3.3 in \cite{GPW}).  If this resolution of $Q$ is minimal then it inherits any cellular structure which might have existed for the resolution of $P$.  Propositions \ref{simplicialFilter} and \ref{GenSimpFilt} show that to a certain extent the converse is true.  Namely that under certain hypotheses any cellular structure for a minimal resolution of $Q$ can be lifted to a minimal resolution of $P$.

Using this approach, of studying minimal resolutions of monomial ideals by examining the resolutions of ``nearby'' ideals in $\mathcal{L}(n)$, may tempt one to make a comparison with the upper-semicontinuity of Betti numbers along deformations of projective varieties in the Hilbert scheme.  However, a more apropos analogy may be with the MacPhersonian, which is a combinatorial version of the Grassmannian \cite{biss}, where monomial ideals play the role of vector spaces and finite atomic lattices replace oriented matroids.  To develop a theory of combinatorial moduli spaces modeled on both of these examples is a subject for future study.

Further results in this paper are Theorem \ref{gradedLn}, which states that $\mathcal{L}(n)$ is a graded lattice of rank $n$, and other structural results in Section \ref{structure} describing covering relations and meet-irreducibles in this lattice.  Additionally, Theorem \ref{genericCoords} gives a lattice theoretic description of strongly generic monomial ideals.      
        
\subsection*{Acknowledgements}
I would like to thank my advisor David Bayer for imparting his intuition and knowldge as I worked on this project in fulfillment of my Ph.D. requirements.  I would also like to give special recognizition to Jeff Phan, whose thesis introduced the ideas which are continued in this paper.  Finally I'd like to thank Ezra Miller for his support and interest in this project. 

\section{Preliminaries}\label{background}

A {\it lattice} is a set $(P, <)$  with an order relation $<$ which is transitive and antisymmetric satisfying the following properties:
\begin{enumerate}
\item $P$ has a maximum element denoted by $\hat{1}$
\item $P$ has a minimum element denoted by  $\hat{0}$
\item Every pair of elements $a$ and $b$ in $P$ has a join $a \vee b$ which is the least upper bound of the two elements
\item Every pair of elements $a$ and $b$ in $P$ has a meet $a \wedge b$ which is the greatest lower bound of the two elements.  
\end{enumerate}

If $P$ only satisfies conditions 2 and 4 then it is a {\it meet-semilattice}.  We will use the following (Proposition 3.3.1 in \cite{sta97}) several times.

\begin{proposition}\label{prop:Stanley}
Any meet-semilattice with a unique maximal element is a lattice.
\end{proposition}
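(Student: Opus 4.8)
The plan is to deduce the two lattice axioms that are not assumed — existence of a maximum element $\hat 1$ and existence of pairwise joins — from the meet-semilattice structure together with the hypothesis that $P$ has a unique maximal element.

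First I would check that the unique maximal element, call it $\hat 1$, is actually the maximum of $P$. Given any $x \in P$, consider the set $\{y \in P : y \ge x\}$; it is nonempty since it contains $x$, so (using that $P$ is finite, which is the setting here) it has a maximal element $m$. If $m$ were not maximal in all of $P$, say $m < z$, then transitivity gives $z \ge x$, so $z$ lies in the set and contradicts maximality of $m$ there; hence $m$ is maximal in $P$, so $m = \hat 1$ by uniqueness, and therefore $x \le \hat 1$. This shows $\hat 1$ is the maximum, which is axiom (1), and $\hat 1$ together with the assumed minimum $\hat 0$ handles axiom (2).

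Next I would produce joins. Fix $a,b \in P$ and let $U = \{x \in P : x \ge a \text{ and } x \ge b\}$ be the set of common upper bounds; $U$ is nonempty because $\hat 1 \in U$. Since pairwise meets exist, finite meets exist by induction, so set $c = \bigwedge_{x \in U} x$. I claim $c = a \vee b$. Because every element of $U$ dominates $a$, the element $a$ is a lower bound for $U$, and as $c$ is the greatest lower bound we get $c \ge a$; similarly $c \ge b$, so $c \in U$, i.e. $c$ is itself a common upper bound of $a$ and $b$. Finally, for any common upper bound $x \in U$ we have $c \le x$ since $c$ is a lower bound for $U$; thus $c$ is the least common upper bound, i.e. the join. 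Antisymmetry of $<$ is inherited, so $P$ satisfies all four conditions.

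The only real subtlety — the main point to be careful about — is ensuring that $\bigwedge_{x\in U} x$ genuinely exists: pairwise meets do not by themselves yield infinite meets, so this step relies on finiteness of $P$ (in an infinite setting one would need to replace it by a direct argument or add a completeness hypothesis). In the finite atomic lattice context of this paper finiteness is automatic. I would also remark that the dual statement — a join-semilattice with a unique minimal element is a lattice — follows by applying the same argument to $P^{\mathrm{op}}$.
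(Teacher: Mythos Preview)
The paper does not actually supply its own proof of this proposition: it is quoted verbatim as Proposition~3.3.1 of \cite{sta97} and used as a black box throughout. Your argument is correct and is exactly the standard proof one finds in Stanley --- take the meet of the (nonempty, since $\hat 1$ exists) set of common upper bounds and check it is the join --- so there is nothing to compare against. Your caveat about finiteness is well placed: the statement as written is only valid for finite $P$ (or more generally for complete meet-semilattices), and the paper is implicitly in the finite setting everywhere.
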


We define an {\it atom} of a lattice $P$ to be an element $x \in P$ such that $x$ covers $\hat{0}$ (i.e. $x > \hat{0}$ and there is no element $a$ such that $x > a > \hat{0}$). We will denote the set of atoms as $\atoms (P)$.  
\begin{definition}
If every element in $P -\{\hat{0}\}$ is the join of atoms, then $P$ is an {\it atomic lattice}.  Furthermore, if $P$ is finite, then it is a {\it finite atomic lattice}. 
\end{definition}

If $P$ is a lattice, then we define elements $x \in P$ to be {\it meet-irreducible} if $x \neq a \wedge b$ for any $ a > x, b>x$. We denote the set of meet-irreducible elements in $P$ by $\mathrm{mi}(P)$. Given an element $x \in P$, the {\it order ideal} of $x$ is defined to be the set $\lfloor {x} \rfloor = \{a \in P | a \leqslant x\}$.  Similarly, we define the {\it filter} of $x$ to be $\lceil {x} \rceil = \{a \in P | x \leqslant a\}$. 

For the purposes of this paper it will often be convenient to consider finite atomic lattices as sets of sets in the following way.  Let $\mathcal{S}$ be a set of subsets of $\{1,...,n\}$ with no duplicates, closed under intersections, and containing the entire set, the empty set, and the sets $\{i\}$ for all $1 \leqslant i \leqslant n$.  Then it is easy to see $\mathcal{S}$ is a finite atomic lattice by ordering the sets in $\mathcal{S}$ by inclusion.  This set obviously has a minimal element, a maximal element, and $n$ atoms, so by Proposition~\ref{prop:Stanley} we need to show that it is a meet-semilattice.  Here the meet of two elements would be defined to be their intersection and since $S$ is closed under intersections this is a meet-semilattice.  Conversely, it is clear that all finite atomic lattices can be expressed in this way, simply by letting $$\mathcal{S}_P = \{ \sigma \,\vert\, \sigma = \support(p), p\in P\},$$ where $\support (p) = \{a_i \,\vert\, a_i \leqslant p, a_i \in \atoms(P)\}$.    

Lastly, there are two different simplicial complexes that one can associate to a finite atomic meet-semilattice $P$.  One is the order complex, $\Delta(P)$, which is the complex where the vertices are the elements of $P$ and the facets correspond to maximal chains of $P$.  Alternatively we can define a special case of the cross cut complex, which we will denote as $\Gamma(P)$, where the atoms correspond to vertices and simplices correspond to subsets of atoms which have a join or meet in $P$.  It is known that $\Delta(P)$ is homotopy equivalent to $ \Gamma(P)$ \cite{topMeth}.

\section{Coordinatizations}\label{coord}

Define a {\it labeling} of $P$ to be any assignment of non-trivial monomials $\mathcal {M} = \{m_{p_1}, ..., m_{p_t}\}$ to some set of elements $p_i \in P$.  It will be convenient to think of unlabeled elements as having the label $1$. Then a labeling is a {\it coordinatization} if the monomial ideal $M_{P, \mathcal{M}}$, which is generated by monomials \[x(a) = \prod_{ p \in \lceil{a}\rceil ^ c} m_p\] for each $a \in \atoms (P)$, has LCM lattice isomorphic to $P$. 

The following theorem and proof are a generalization of Theorem 5.1 in \cite{phan}.  

\begin{theorem}\label{coordinatizations}
Any labeling $\mathcal{M}$ of elements in a finite atomic lattice $P$ by monomials satisfying the following two conditions will yield a coordinatization of the lattice $P$.

\begin{itemize}
\item If $p \in \mi (P)$ then $m_p \not = 1$.  (i.e. all meet-irreducibles are labeled)
\item If $\gcd (m_p, m_q) \not = 1$ for some $p, q \in P$ then $p$ and $q$ must be comparable.  (i.e. each variable only appears in monomials along one chain in $P$.)
\end{itemize}
\end{theorem}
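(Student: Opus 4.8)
The plan is to show directly that the map $\support$ gives a lattice isomorphism between $P$ and the LCM lattice of $M_{P,\mathcal{M}}$, where the generators are $x(a) = \prod_{p \in \lceil a \rceil^c} m_p$ for $a \in \atoms(P)$. The key computational lemma I would establish first is a formula for the least common multiple of a subset of generators: for $A \subseteq \atoms(P)$, I claim that
\[
\lcm\bigl(x(a) : a \in A\bigr) = \prod_{p \,\in\, \bigl(\bigcap_{a \in A}\lceil a \rceil\bigr)^c} m_p = \prod_{p \,\not\geqslant\, \bigvee A} m_p.
\]
This follows because a variable from $m_p$ divides $x(a)$ exactly when $p \not\geqslant a$, so the exponent of (the monomial) $m_p$ contributed to the lcm is the max over $a \in A$, which is nonzero precisely when $p \not\geqslant a$ for some $a \in A$, i.e. when $p \not\geqslant \bigvee A$. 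Here the second labeling hypothesis is essential: since each variable appears only in monomials along a single chain, there is no interaction between different $m_p$'s — the exponent of a given variable in the lcm really is just the maximum of its exponents among the chosen generators, with no "collision" from a variable that appears in two incomparable labels. Without this, the exponent bookkeeping would not reduce cleanly to the join $\bigvee A$.

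Next I would use this formula to define the candidate isomorphism. Since $P$ is a finite atomic lattice, every element $p \in P$ equals $\bigvee \support(p)$, so I map $p \mapsto \lcm(x(a) : a \in \support(p))$, which by the lemma is $\prod_{q \not\geqslant p} m_q$. I must check three things: (i) this is well-defined and lands in the LCM lattice $L_{M}$; (ii) it is order-preserving and order-reflecting, hence injective; (iii) it is surjective onto $L_M$. For (ii), note $p \leqslant p'$ in $P$ iff $\{q : q \not\geqslant p'\} \subseteq \{q : q \not\geqslant p\}$; the forward direction is immediate, and the reverse needs that $p \mapsto \prod_{q\not\geqslant p} m_q$ genuinely loses information only in a controlled way — this is exactly where the first hypothesis enters. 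For (iii), every element of $L_M$ is by definition $\lcm$ of some set of generators, hence of the form $\prod_{q \not\geqslant \bigvee A} m_q$ for some $A \subseteq \atoms(P)$, which is the image of $\bigvee A \in P$.

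The main obstacle is injectivity: I need that distinct lattice elements $p \neq p'$ give distinct monomials, equivalently that $p \mapsto \{q \in P : q \not\geqslant p\}$ is injective on $P$ after passing to the labeled elements only — i.e. that $\{q : m_q \neq 1,\ q \not\geqslant p\}$ determines $p$. Suppose $p \neq p'$; without loss of generality $p \not\leqslant p'$. I want a labeled element $q$ with $q \geqslant p$ but $q \not\geqslant p'$ (then $m_q$ divides $x(a')$ for the relevant atoms but is "missing" from the image of $p$, distinguishing the two monomials). The natural candidate is a meet-irreducible $q$ lying in the filter $\lceil p \rceil$ but not in $\lceil p' \rceil$: the standard lattice fact is that every element of a finite lattice is the meet of the meet-irreducibles above it, so if no meet-irreducible $q \geqslant p$ failed to dominate $p'$, then $p'$ would dominate the meet of all meet-irreducibles above $p$, which is $p$ — contradicting $p \not\leqslant p'$. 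Since by the first hypothesis every meet-irreducible is labeled, $m_q \neq 1$, and the argument closes. I would then assemble these pieces: the lcm formula, the meet-irreducible separation argument for injectivity, order-preservation in both directions, and surjectivity, to conclude $\support$ (equivalently the monomial-valued map above) is a lattice isomorphism $P \xrightarrow{\ \sim\ } L_{M_{P,\mathcal{M}}}$, which is precisely the assertion that $\mathcal{M}$ is a coordinatization.
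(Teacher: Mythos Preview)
Your proof is essentially the paper's: the map $p \mapsto \prod_{q \not\geqslant p} m_q$ is exactly the paper's $f$, your lcm formula is what the paper proves to show $f$ is well-defined, join-preserving, and surjective, and your injectivity argument via meet-irreducible separation is the same as the paper's.

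One step needs sharpening. In justifying the lcm formula you say the chain hypothesis means ``there is no interaction between different $m_p$'s,'' but two labels on the \emph{same} chain may well share a variable, so the $m_p$'s are not coprime and you cannot treat each $m_p$ as an independent block in the lcm. The formula is still correct, for the following reason: fix a variable $x$, and let $p_1 < \cdots < p_k$ be the chain of elements whose label involves $x$, with $x^{e_j}\,\|\,m_{p_j}$. For any atom $a$, the set $\{j : p_j \not\geqslant a\}$ is an initial segment $\{1,\ldots,j_a\}$ of this chain (if $p_i \geqslant a$ then $p_j \geqslant a$ for all $j \geqslant i$), so the exponent of $x$ in $x(a)$ is the partial sum $e_1+\cdots+e_{j_a}$. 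Taking the maximum over $a \in A$ therefore picks out the largest $j_a$, which is the largest $j$ with $p_j \not\geqslant \bigvee A$, giving exactly the exponent of $x$ in $\prod_{q \not\geqslant \bigvee A} m_q$. The paper makes this chain argument explicit; once you replace your ``no interaction'' sentence with it, the two proofs coincide.
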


\begin{proof}
Let $P'$ be the LCM lattice of $M_{P, \mathcal{M}}$.  We just need to show that $P'$ is isomorphic to $P$.  For $b \in P$ define $f$ to be the map such that \[f(b) = \prod_{ p \in \lceil{b}\rceil ^ c} m_p.\]  We will show that this map is well-defined and that it is an isomorphism of lattices. In particular we must show that it is a lattice homomorphism (preserves joins and meets) as well as a bijection on sets. 

First note that obviously $f$ is a bijection on atoms and that \[\lceil {b} \rceil ^c = \bigcup_{a_i \in \support(b)} \lceil{a_i}\rceil ^c. \]  

In order to show that $f$ is well-defined we will show that 
\begin{equation*}
f(b) = \lcm \{f(a_i) | a_i \in \support(b) \}.
\end{equation*}
 This will also imply that $f$ is join-preserving and a surjection.  By the two remarks above, we know that \[f(b) = \prod m_p \] where $p \in \lceil{a_i}\rceil^c$ for at least one $a_i \in \support(b)$.  It is clear that $f(a_j)$ divides $f(b)$ for all $a_j\in\support(b)$. We then need to show that if $x_i^{m_i}$ is the highest power of $x_i$ dividing $f(b)$ then there is some $a_j\in\support(b)$ such that $x_i^{m_i}$ divides $f(a_j)$.
This follows from the fact that $x_i$ only divides monomials that label elements in one chain of $P$.  Indeed let $q$ be the largest element of this chain which is in $\lceil b\rceil^c$. Then there exists $a_j\in\support(b)$ such that $q\in\lceil a_j\rceil^c$. Since any $p\leqslant q$ is in both $\lceil b\rceil^c$ and $\lceil a_j\rceil^c$, the power of $x_i$ in $f(b)$ and $f(a_j)$ will be equal.

To show that $f$ is meet-preserving follows from the above and the fact that $\support(p \wedge p') = \support(p) \cap \support(p')$, since  
\begin{equation*}
\begin{aligned}
f(p \wedge p') & = \lcm \{ f(a_i) \,\vert\, a_i \in \support(p \wedge p')\}  \\
                         &=  \lcm \{ f(a_i) \,\vert\, a_i \in \support(p)  \cap \support(p')\} = f(p) \wedge f(p'). 
\end{aligned}
\end{equation*}

Finally, we need to show that this map is injective. We begin by showing that $f(a) \leqslant f(b)$ implies that $a \leqslant b$.  If $f(a) \leqslant f(b)$ then $\mi(P)\cap\lceil a\rceil^c \subset \mi(P)\cap\lceil b\rceil^c$ since all meet-irreducibles must be non trivially labeled.  This is equivalent to saying that  $\mi(P) \cap \lceil{b}\rceil \subset \mi(P) \cap \lceil{a}\rceil$.  All elements in $P$ are determined by their meet-irreducibles, so we get get that $a \leqslant b$.  To see that $f$ is injective consider the fact that $f(a) = f(b)$ implies both that $f(a) \leqslant f(b)$ and $f(b) \leqslant f(a)$ then by the above argument we have that $a \leqslant b$ and $b \leqslant a$ as needed.    
\end{proof}  

To complete our characterization, we also show that every monomial ideal is in fact a coordinatization of its LCM lattice as follows.  Let $M$ be a monomial ideal with $n$ generators and let $P_M$ be its LCM lattice.  For notational purposes, define $P = \{p \, | \, \overline{p} \in P_M\}$ to be the finite atomic lattice where $p < p'$ if and only if $\overline{p} < \overline{p'}$ in $P_M$.  In other words, we simply forget the data of the monomials in $P_M$.  Define a labeling of $P$ in the following way, let $\mathcal{D}$ be the set:
\begin{equation}\label{deficitLabeling}
\{m_p =  \frac{\gcd \{ \overline{t} \,| \, t > p\}}{\overline{p}} \,|\,  p \in P\}.
\end{equation}
Where $\gcd \{ \overline{t} \,| \, t > p\}$ for $p = \hat{1}$ is defined to be $\overline{\hat{1}}$.  Note that the quotients $m_p$ will be a monomial since clearly $\overline{p}$ divides $\overline{t}$ for all $t >p$.

\begin{lemma} \label{deficitsCoverMI} 
If $p$ is a meet-irreducible then $m_p \neq 1$.  In other words, all meet-irreducibles are labeled non-trivially.
\end{lemma}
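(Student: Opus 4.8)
The plan is to reduce to the standard lattice-theoretic fact that in a finite lattice a meet-irreducible element has a unique cover, and then to observe that this forces the gcd in the definition (\ref{deficitLabeling}) of $m_p$ to collapse to the monomial attached to that cover.

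First I would record that a meet-irreducible $p$ of a finite lattice has a unique cover $c$, i.e. $\lceil p\rceil\setminus\{p\}$ has a least element: if $p$ had two distinct covers $c_1\ne c_2$, then $c_1\wedge c_2$ would lie between $p$ and $c_1$ and cannot equal $c_1$ (that would give $c_1\leqslant c_2$, hence $c_1=c_2$), so $c_1\wedge c_2=p$, contradicting meet-irreducibility; the remaining possibility, that $p$ is the maximum $\hat{1}$, is harmless, since $m_{\hat{1}}=1$ by convention and $\hat{1}$ occurs in no generator $x(a)$. Moreover, for the unique cover $c$, every $t\in P$ with $t>p$ satisfies $t\geqslant c$, because a saturated chain from $p$ to $t$ begins with the cover $c$.

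Then I would pass to $P_M$, where order is divisibility of monomials. Since $\overline{c}$ divides $\overline{t}$ for all $t>p$, it divides $\gcd\{\overline{t}\mid t>p\}$; and since $c$ itself is among those $t$, the gcd divides $\overline{c}$. Hence $\gcd\{\overline{t}\mid t>p\}=\overline{c}$, and therefore $m_p=\overline{c}/\overline{p}$. As $c$ covers $p$ strictly, $\overline{p}$ properly divides $\overline{c}$, so $m_p$ is a monomial of positive degree, i.e. $m_p\ne 1$. The only slightly delicate point --- the main (mild) obstacle --- is that $\gcd\{\overline{t}\mid t>p\}$ is a priori just a monomial and need not belong to the LCM lattice, so one cannot simply say ``meet $=$ gcd'' in $P_M$; it is exactly the meet-irreducibility of $p$, through the unique cover $c$, that identifies this gcd with the lattice element $\overline{c}$. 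The rest --- the link between meet-irreducibility and having a unique cover, and the dictionary between the orders on $P$ and on $P_M$ and divisibility --- is routine.
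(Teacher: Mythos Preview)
Your proof is correct and follows essentially the same approach as the paper: both hinge on the fact that a meet-irreducible $p\neq\hat 1$ in a finite lattice has a unique cover, and use this cover to see that the gcd strictly exceeds $\overline{p}$. The only difference is cosmetic---you compute the gcd exactly as $\overline{c}$ and conclude $m_p=\overline{c}/\overline{p}\neq 1$, whereas the paper writes the cover as $p\vee a_i$ for some atom $a_i\not\leqslant p$ and observes that $\overline{a_i}$ divides the gcd but not $\overline{p}$; your version is slightly more direct and does not need atomicity.
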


\begin{proof}
If $p$ is a meet-irreducible then we know that all $t > p$ are greater than or equal to some $p \vee a_i$ where $p \vee a_i$ is the unique element covering $p$.  In particular we know that $a_i$ is not less than $p$.  So, with this knowledge we can easily see that $\overline{p}, \overline{p \vee a_i}$ and $\overline{a_i}$ all divide $\gcd\{ \overline{t} \, | \, t > p\}$.  Thus, since $\overline{a_i}$ does not divide $\overline{p}$ we see that $\overline{p} \neq  \gcd\{ \overline{t} \, | \, t > p\}$ and $m_p \neq 1$ as needed.
\end{proof}

The next lemma implies that variables in this labeling must lie along chains.

\begin{lemma} \label{deficitsVarAlongChains}
If $\gcd\{m_p, m_q\} \neq 1$ then $p$ and $q$ must be comparable. 
\end{lemma}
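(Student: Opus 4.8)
The plan is to prove the contrapositive: assuming $p$ and $q$ are incomparable in $P$, I will show $\gcd(m_p,m_q)=1$. Write $e_i(m)$ for the exponent of the variable $x_i$ in a monomial $m$. Unwinding the definition of the labeling $\mathcal{D}$ in \eqref{deficitLabeling}, the condition $x_i\mid m_p$ is equivalent to
\[
e_i(\overline p)\;<\;\min\{\,e_i(\overline t)\,:\,t>p\,\},
\]
and the condition $x_i\mid m_q$ to the analogous inequality with $q$ in place of $p$. So it suffices to show that these two strict inequalities cannot both hold for one and the same index $i$.

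The one structural fact I will use is that in an LCM lattice joins are least common multiples: under the identification of $P$ with $P_M$ (the order isomorphism $p\mapsto\overline p$) we have $\overline{p\vee q}=\lcm(\overline p,\overline q)$, since $\lcm(\overline p,\overline q)$ is the least element of $P_M$ divisible by both $\overline p$ and $\overline q$. In particular $e_i(\overline{p\vee q})=\max\bigl(e_i(\overline p),\,e_i(\overline q)\bigr)$ for every $i$. Now suppose $x_i\mid\gcd(m_p,m_q)$, and assume without loss of generality that this maximum is attained at $p$, so $e_i(\overline{p\vee q})=e_i(\overline p)$. Because $p$ and $q$ are incomparable we have $q\not\leqslant p$, hence $p\vee q>p$; thus $\overline{p\vee q}$ is one of the monomials $\overline t$ with $t>p$ occurring in the gcd that defines $m_p$, and therefore
\[
e_i(\overline{p\vee q})\;\geqslant\;\min\{\,e_i(\overline t)\,:\,t>p\,\}\;>\;e_i(\overline p),
\]
the last inequality being exactly the hypothesis $x_i\mid m_p$. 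This contradicts $e_i(\overline{p\vee q})=e_i(\overline p)$, so no such $x_i$ exists and $\gcd(m_p,m_q)=1$.

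I do not anticipate a genuine obstacle: the argument is essentially the observation that $p\vee q$ simultaneously ``sees'' $m_p$ and $m_q$, while its exponent vector is pinned down coordinatewise by those of $\overline p$ and $\overline q$. The only point needing a moment's care is the identity $\overline{p\vee q}=\lcm(\overline p,\overline q)$; if one prefers not to invoke it, the weaker inequality $e_i(\overline{p\vee q})\leqslant\max(e_i(\overline p),e_i(\overline q))$ already suffices, and it holds simply because $\lcm(\overline p,\overline q)$ is some upper bound of $\overline p$ and $\overline q$ in $P_M$ while $\overline{p\vee q}$ is the least one.
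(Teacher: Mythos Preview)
Your argument is correct and follows essentially the same route as the paper: both prove the contrapositive by using the element $r=p\vee q$ (with $\overline r=\lcm(\overline p,\overline q)$) to bound the $\gcd$ defining $m_p$, the point being that $e_i(\overline r)=e_i(\overline p)$ whenever $e_i(\overline p)\geqslant e_i(\overline q)$. The paper packages this via the partition of variable indices into the sets $A,B,C$ and concludes that $m_p$ involves only $B$-variables and $m_q$ only $A$-variables, whereas you argue the contradiction directly for a single variable; the underlying mechanism is identical.
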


\begin{proof}
To show this we will show the contrapositive.  Suppose $p$ and $q$ are not comparable.  Then using $\overline{p} = \prod x_{i} ^ {a_i}$ and $\overline{q} = \prod x_{i}^{b_i}$ we can define the following sets:
$$ A = \{ i \, | \, a_i > b_i \}, B = \{i \, | \, b_i > a_i\}, C = \{i \, | \, a_i = b_i \}.$$
Clearly since $\overline{p}$ and $\overline{q}$ do not divide each other we can see that sets $A$ and $B$ are non empty, and by definition their intersections are empty.

Now consider the element $\overline{r} = \lcm(\overline{p}, \overline{q})$ and its corresponding element $r \in P$.  By definition the exponents on $x_i$ in $\overline{r}$ are $a_i$ if $i \in A$ or $i \in C$, and $b_i$ if $i \in B$.  Moreover since $r$ is greater than both $p$ and $q$, we know that both $\gcd\{\overline{t} \, | \, t > p\}$ and $\gcd\{\overline{t} \, | \, t > q\}$ divide $\overline{r}$. 

Now we want to show that the $\gcd\{ m_p, m_q\} = 1$.  For notational purposes let  $\prod x_{i}^{n_i} = \gcd\{ \overline{t} \, | \, t> p\}$.  Then we know that if $i \in A$ or $C$, $n_i = a_i$, and if $i \in B$ then $b_i \geq n_i \geq a_i$.  So when we divide by $\overline{p}$ we are left with a monomial consisting only of variables indexed by $B$.  A similar argument shows that $m_q$ consists only of variables indexed by $A$.  Since $A \cap B = \emptyset$ we have that $\gcd\{ m_p, m_q\} = 1$.
\end{proof}

Using this labeling we can prove our desired result that every monomial ideal can be realized as a coordinatization of its LCM lattice.

\begin{proposition}\label{realizable}
The labeling of $P$ as defined by equation \ref{deficitLabeling} is a coordinatization and the resulting monomial ideal $M_{P, \mathcal{D}} = M$.
\end{proposition}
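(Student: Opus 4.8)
The plan is to separate the statement into its two halves. That the labeling $\mathcal{D}$ is a coordinatization is immediate: Lemmas \ref{deficitsCoverMI} and \ref{deficitsVarAlongChains} verify exactly the two hypotheses of Theorem \ref{coordinatizations}, so the LCM lattice of $M_{P,\mathcal{D}}$ is isomorphic to $P$ (this will also follow a posteriori from the second half). The substance is the equality $M_{P,\mathcal{D}}=M$. Since $M$ is minimally generated by the monomials $\overline{a}$ for $a\in\atoms(P)$, and $M_{P,\mathcal{D}}$ is generated by the monomials $x(a)=\prod_{q\in\lceil a\rceil^c}m_q$ for $a\in\atoms(P)$, it suffices to prove $x(a)=\overline{a}$ for every atom $a$.

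I would prove this one variable at a time. Fix a variable $x_i$ and let $e_p\in\ZZ_{\geq 0}$ be the exponent of $x_i$ in $\overline{p}$, for $p\in P$. Then $e$ is order preserving, $e_{\hat 0}=0$, and crucially $e_{p\vee p'}=\max(e_p,e_{p'})$ because $\overline{p\vee p'}=\lcm(\overline p,\overline{p'})$. From the definition \eqref{deficitLabeling} of $m_q$, the exponent of $x_i$ in $m_q$ equals $\min_{t>q}e_t-e_q$ for $q\neq\hat 1$ (and $0$ for $q=\hat 1$). So the claim for $x_i$ amounts to the combinatorial identity
\[
\sum_{q\in\lceil a\rceil^c}\Bigl(\min_{t>q}e_t-e_q\Bigr)=e_a .
\]

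The device I would use is the following. For $1\leq j\leq e_{\hat 1}$ let $\mu_j$ be the unique maximal element of the order ideal $D_j=\{p\in P: e_p<j\}$; this is well defined because $D_j$ is nonempty ($\hat 0\in D_j$) and closed under join by the $\max$-formula for $e$. One checks that $\mu_j=q$ if and only if $e_q<j\leq\min_{t>q}e_t$, so for each $q\neq\hat 1$ the integer $\min_{t>q}e_t-e_q$ counts precisely the indices $j$ with $\mu_j=q$. Summing over $q\in\lceil a\rceil^c$ (note $\hat 1\notin\lceil a\rceil^c$ when $a\neq\hat 1$, and $e_{\mu_j}<j\leq e_{\hat 1}$ forces $\mu_j\neq\hat 1$) converts the left side of the identity into $\#\{\,j:\ 1\leq j\leq e_{\hat 1},\ a\not\leq\mu_j\,\}$. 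It remains to identify this set. If $j>e_a$ then $a\in D_j$, so $a\leq\mu_j$. If $j\leq e_a$ then the exponent of $x_i$ in $\overline{\mu_j}=\lcm\{\overline p: e_p<j\}$ is $\max\{e_p: e_p<j\}<j\leq e_a$, strictly less than the exponent of $x_i$ in $\overline a$; hence $\overline a$ does not divide $\overline{\mu_j}$ and $a\not\leq\mu_j$. Therefore the set is exactly $\{1,\dots,e_a\}$, of size $e_a$, which proves the identity. Running this for every $i$ yields $x(a)=\overline a$, hence $M_{P,\mathcal{D}}=M$.

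The step I expect to be the main obstacle is exactly this reduction to the count, together with the observation that each $D_j$ has a unique maximal element: the tempting shortcut ``an atom below a join lies below one of the joinands'' is \emph{false} in a general finite atomic lattice, so one cannot argue purely in terms of supports and must instead use the divisibility order directly---comparing the $x_i$-exponents of $\overline a$ and of $\overline{\mu_j}$---to decide when $a\leq\mu_j$. Once that is in place, the remaining points (bijectivity on atoms, and passing from ``equal generators'' to ``equal ideals'') are routine, and the boundary conventions at $\hat 1$ and the case $e_a=0$ are absorbed automatically by the counting formula.
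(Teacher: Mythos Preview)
Your argument is correct, and in fact shares its central construction with the paper's proof: the elements you call $\mu_j=\max\{p:e_p<j\}$ are exactly the elements $p_j=\bigvee_{x^j\nmid\overline r}r$ that the paper introduces. The difference lies in packaging. The paper proves $x(a_i)\mid\overline{a_i}$ and $\overline{a_i}\mid x(a_i)$ separately; for the first divisibility it uses a chain argument that leans on Lemma~\ref{deficitsVarAlongChains} (the $p$'s with $x\mid m_p$ lie on a chain, so their contributions telescope into the top one), and only for the second divisibility does it bring in the $p_j$. You instead convert the exponent of $x_i$ in $x(a)$ directly into the count $\#\{j:\mu_j\in\lceil a\rceil^c\}$ and identify that set with $\{1,\dots,e_a\}$, obtaining both inequalities at once. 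This is cleaner and, as you observe, makes the appeal to the two lemmas for the ``coordinatization'' half redundant: once $x(a)=\overline a$ for every atom, $M_{P,\mathcal D}=M$ and hence $\LCM(M_{P,\mathcal D})\cong P$ automatically. The only point worth making explicit is that $\hat 0\in\lceil a\rceil^c$ and your formula absorbs the contribution of $m_{\hat 0}$ correctly (those are exactly the indices $j$ with $\mu_j=\hat 0$), but your counting identity already handles this without special casing.
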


\begin{proof}
Lemmas \ref{deficitsCoverMI} and \ref{deficitsVarAlongChains} both show that $\mathcal{D}$ is a coordinatization of $P$.  It remains to show that the resulting monomial ideal is equal to $M$.

To show this we will demonstrate that for each atom $$x(a_i) = \prod_{p \in \lceil a_i \rceil^c} m_p$$ divides $\overline{a_i}$ and vice versa.   

To see that $x(a_i) \mid \overline{a_i}$, consider the variable $x$.  We claim that if $p \in \lceil a_i \rceil^c$ and $x^n \mid m_p$ then $x^n \mid \overline{a_i}$.  If $x^n \mid m_p$ then $x^n \overline{p} \mid \overline{t}$ for all $t > p$.  Moreover, $a_i \vee p > p$ since $p$ is not greater than $a_i$  and $\overline{a_i \vee p} = \lcm(\overline{a_i}, \overline{p})$ by construction, so $x^n \overline{p} \mid \lcm(\overline{a_i}, \overline{p})$.  This implies $x^n\mid \overline{a_i}$.  

Additionally, if we have a chain in $P$, $p_1 < \dots < p_k$ such that $p_k \in  \lceil a_i \rceil^c$ and $x^{n_i} \mid m_{p_i}$ then $x^{\sum  n_i} \mid \overline{a_i}$.  This follows from the fact that $x^{n_i} \mid m_{p_i}$ implies that $x^{n_i} \overline{p_i} \mid \overline{p_{i+1}}$ and further implies that $x^{\sum  n_i} \mid \overline{p_k}$.  By the previous paragraph we get the desired result, and successfully prove that $x(a_i) \mid \overline{a_i}$.

To show that $\overline{a_i} \mid x(a_i)$, we first show that if some variable $x$ divides $\overline{a_i}$ that there exists some $p \in \lceil a_i \rceil^c$ such that $x \mid m_p$.  If we let $$p = \bigvee_{\displaystyle{x \nmid \overline{r}}} r,$$ then clearly $x \nmid \bar{p}$ so $p \in \lceil a_i \rceil^c$ but $x$ divides all elements greater than $p$ by construction so $x \mid m_p$.  Now to understand what happens when $x^n \mid \overline{a_i}$, we apply this idea successively by defining elements $$p_i = \bigvee_{\displaystyle{x^{i} \nmid \overline{r}}} r.$$  By inclusion of sets, we see that $p_i \leq p_{i+1}$ so we actually get a chain of elements in $P$.  If some elements in the chain are actually equal, for example $p_{t} = \dots = p_{t+l}$, then by the above reasoning $x^{t} \mid \overline{p_t}$ and $x^{t+l+1} \mid \overline{r}$ for all $r > p_t$ so we may conclude that $m_{p_t}$ is divisible by $x^{l+1}$.  Since all of the elements in this chain are not greater than $a_i$ by construction, this shows that if $x^n \mid \overline{a_i}$ that $x^n \mid x(a_i)$ as needed.  

Therefore for each atom we have that $x(a_i) \mid \overline{a_i}$ and vice versa, thus $M_{P,\mathcal{D}} = M$.

\end{proof}

\begin{figure}[ht]
\centering
\subfigure[Minimal labeling]{
\includegraphics[scale=.5]{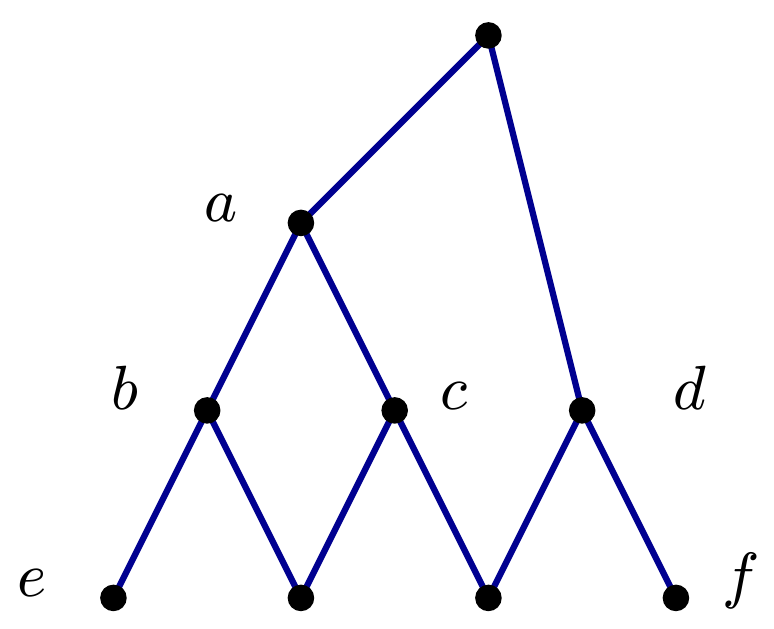}
\label{minSF}
}
\hspace{.5cm}
\subfigure[ECCV labeling]{
\includegraphics[scale=.5]{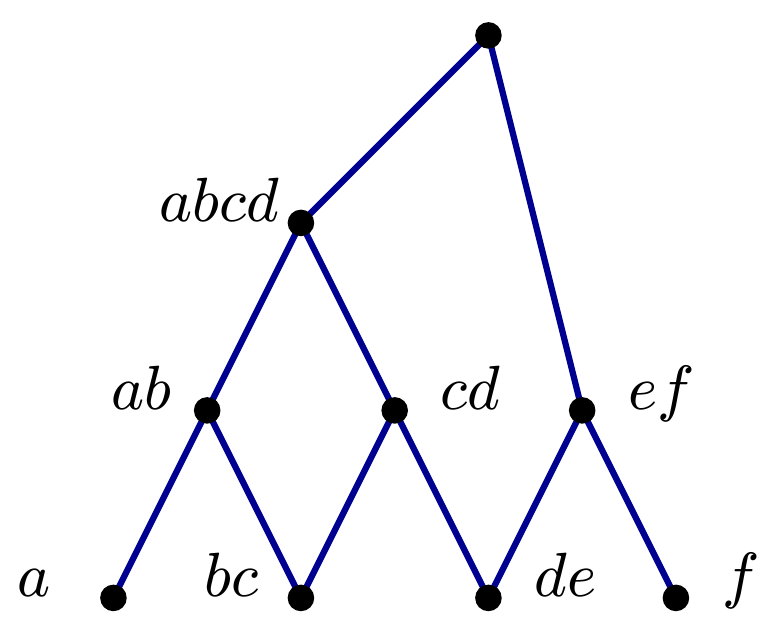}
\label{ECCV}
}
\caption{}
\end{figure}

\begin{example}
The labeling in figure \ref{minSF} obviously satisfies the conditions of \ref{coordinatizations} since only meet-irreducibles are labeled and each variable is used only once.  In fact this is the coordinatization which produces a ``minimal squarefree monomial ideal'' found in \cite{phan}.   The monomial ideal given by this coordinatization in figure \ref{minSF} is $M = (cdf, def, bef, abce)$.
\end{example}

\begin{example}
The labeling in figure \ref{ECCV} will be of use to us later. It is an example of the ECCV labeling (Every Chain Covered by a Variable) which is defined as follows.  Let $\{c_1, \dots, c_t\}$ be the set of all maximal chains in $P$.  Then for variables in the ring $R = k[x_1,\dots x_t]$ define the following labeling, 
	\[\mathcal{M} = \{m_p = \prod_{i\,:\,p \in c_i} x_i \,|\, p \in P \}.\]

Every meet-irreducible is covered since every element of $P$ is covered and each variable appears only along one chain by definition, so the conditions of \ref{coordinatizations} are satisfied.  The example in figure \ref{ECCV} shows such a coordinatization where the monomial ideal is $$M = (bc^2d^2e^2f^2, ade^2f^2, a^2b^2cf, a^3b^3c^3d^3e).$$
\end{example}

The construction of ``nearly Scarf'' monomial ideals found in \cite{velascoFrames} and \cite{velascoNonCW} can easily be identified as a specific coordinatization where every element in $P- \{\hat{0}, \hat{1}\}$ is labeled.  Additionally, the construction of monomial ideals whose minimal resolutions are supported on trees in \cite{floystad} can be seen as an instance of coordinatizing a specific lattice defined in terms of the tree (see section 6.2 in \cite{mapesThesis}).

\section{The set of all atomic lattices with $n$ atoms}\label{defoSection}
The set of all possible deformations of the exponents of a monomial ideal can be viewed as a fan in exponent space.  This set can be unwieldy if the monomial ideal one starts with is far from being generic, and has lots of redundancy in the combinatorial type of the monomial ideals occuring as deformations.  Thus, it can be easier to consider questions about these deformations in the following setting.  Define the set $\mathcal{L}(n)$ to be the set of all finite atomic lattices with $n$ ordered atoms.  This set has a partial order where $Q \leqslant P$ if and only if there exists a join-preserving map which is a bijection on atoms from $P$ to $Q$ (note that such a map will also be surjective).  In \cite{phan}, Phan shows the following result.

\begin{theorem}\label{Ln} 
With the partial order $\leqslant$, $\mathcal{L}(n)$ is a finite atomic lattice with $2^n -n -2$ atoms.
\end{theorem}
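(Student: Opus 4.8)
The plan is to establish the three constituent assertions in turn: (i) $\mathcal{L}(n)$ is a meet-semilattice with a unique maximal element (so a lattice, by Proposition~\ref{prop:Stanley}); (ii) every element of $\mathcal{L}(n)$ is a join of atoms; (iii) the number of atoms is exactly $2^n - n - 2$. Throughout I would use the ``sets of sets'' model from Section~\ref{background}: a finite atomic lattice on the ordered atoms $\{1,\dots,n\}$ is a family $\mathcal{S}\subseteq 2^{\{1,\dots,n\}}$ containing $\emptyset$, the full set, and each singleton $\{i\}$, and closed under intersection, ordered by inclusion. The key translation is that the partial order $Q\leqslant P$ (existence of a join-preserving atom-bijection $P\to Q$) corresponds exactly to containment $\mathcal{S}_Q\subseteq\mathcal{S}_P$ of the associated set systems. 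I would first verify this translation carefully, since it makes everything else transparent: a join-preserving atom-bijection sends $\mathrm{supp}(p)$ to a set that must already appear in $Q$, forcing $\mathcal{S}_Q\subseteq\mathcal{S}_P$, and conversely containment of set systems gives the required map by sending $p\mapsto\bigvee_{i\in\mathrm{supp}(p)} a_i^Q$.

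For (i): the unique maximal element is the lattice whose set system is the full Boolean lattice $2^{\{1,\dots,n\}}$ (equivalently, the ideal of a generic monomial ideal / the free resolution on the full simplex), since every $\mathcal{S}$ is contained in it. For meets, given lattices $P,Q$ with set systems $\mathcal{S}_P,\mathcal{S}_Q$, I claim $\mathcal{S}_P\cap\mathcal{S}_Q$ is again a valid set system: it contains $\emptyset$, the full set, and all singletons (all of these lie in every such system), and it is closed under intersection because both $\mathcal{S}_P$ and $\mathcal{S}_Q$ are. Hence $\mathcal{S}_P\cap\mathcal{S}_Q$ defines a finite atomic lattice which is the meet of $P$ and $Q$ in $\mathcal{L}(n)$. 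By Proposition~\ref{prop:Stanley}, $\mathcal{L}(n)$ is a lattice.

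For (ii) and (iii) together: the atoms of $\mathcal{L}(n)$ are the elements covering the minimum. The minimum element has set system $\mathcal{S}_{\min} = \{\emptyset,\{1\},\dots,\{n\},\{1,\dots,n\}\}$ — only the forced sets. Adding a single new set $\sigma$ to $\mathcal{S}_{\min}$ and closing under intersection: since $\sigma\cap\{i\}$ is already present and $\sigma\cap\{1,\dots,n\}=\sigma$, no further sets are generated, so $\mathcal{S}_{\min}\cup\{\sigma\}$ is already closed. This is a valid set system precisely when $\sigma\notin\mathcal{S}_{\min}$, i.e. $\sigma$ is neither empty, a singleton, nor the full set; there are $2^n - n - 2$ such $\sigma$. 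Each gives an element of $\mathcal{L}(n)$ covering the minimum (nothing strictly between, since any proper intermediate system would be $\mathcal{S}_{\min}$ itself), hence an atom, and conversely every atom arises this way since any $\mathcal{S}\supsetneq\mathcal{S}_{\min}$ contains some such $\sigma$. Finally, any $\mathcal{S}$ is the join (in $\mathcal{L}(n)$) of the atoms corresponding to its ``extra'' sets $\sigma\in\mathcal{S}\setminus\mathcal{S}_{\min}$: the join of a family of elements corresponds to the intersection-closure of the union of their set systems, which here recovers $\mathcal{S}$ exactly because $\mathcal{S}$ is itself intersection-closed and contains all these $\sigma$. Thus $\mathcal{L}(n)$ is atomic with $2^n-n-2$ atoms.

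The main obstacle I anticipate is not any single step but getting the order-correspondence in the right direction and being careful about what ``join'' means inside $\mathcal{L}(n)$ versus inside an individual lattice $P$: the join of elements of $\mathcal{L}(n)$ is computed by unioning set systems and then taking intersection-closure (the smallest valid set system containing all of them), and one must check this closure operation is well-defined and genuinely produces the least upper bound. Verifying that adding sets and closing up never destroys atomicity or the singleton/full-set conditions is routine once the model is set up, but the bookkeeping for the join formula deserves the most care.
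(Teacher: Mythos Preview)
Your proposal is correct and follows essentially the same route as the paper: both use the sets-of-sets model, take the meet to be intersection of set systems, identify the maximum as the Boolean lattice, and describe the atoms as $\mathcal{S}_{\min}\cup\{\sigma\}$ for the $2^n-n-2$ admissible $\sigma$. You are more explicit than the paper in two places---verifying that the partial order on $\mathcal{L}(n)$ really does translate to containment of set systems, and arguing why every element is a join of atoms---both of which the paper asserts without detail; this extra care is appropriate and does not constitute a different approach.
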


We include a proof here since it currently does not exist in the literature.

\begin{proof}
It is clear that $\mathcal{L}(n)$ has both a unique minimal and maximal element, the lattice where the atoms are also the coatoms, and the boolean lattice respectively.  Thus by Proposition~\ref{prop:Stanley} in order to show that $\mathcal{L}(n)$ is a lattice it is enough to show that $\mathcal{L}(n)$ is a meet semilattice.   

Here it will be convenient to think of the elements in $\mathcal{L}(n)$ as sets $\mathcal{S}$ as described in Section \ref{background}.  For any two elements $\mathcal{S}_1$ and $\mathcal{S}_2$ in $\mathcal{L}(n)$ define their meet to be the intersection of the two sets, denoted $\mathcal{R}$.  Clearly $\mathcal{R}$ is the largest set contained in both $\mathcal{S}_1$ and $\mathcal{S}_2$ thus we only need to show that $\mathcal{R}$ is an element of $\mathcal{L}(n)$.  Clearly $\mathcal{R}$ contains the sets $\{1, \dots, n\}, \emptyset, \{i\}$ for all $i$ since all of these are contained in $\mathcal{S}_1$ and $\mathcal{S}_2$.  Lastly, it is closed under intersections, since both $\mathcal{S}_1$ and $\mathcal{S}_2$ are closed under intersections.

To see that $\mathcal{L}(n)$ is a finite atomic lattice, observe that the elements covering the minimal element are of the form \[\mathcal{S}= \{\emptyset, \{1\}, \{2\}, \dots, \{n\}, \{1,\dots,n\}, \sigma\}\] where $\sigma$ is any subset of $\{1,\dots,n\}$ other than those already in $\mathcal{S}$.  Clearly there are $2^n - n -2$ such subsets so that is the number of atoms.  Moreover, any element $L \in \mathcal{L}(n) - \hat{0}$ is the join of atoms.  
\end{proof}  

A {\it deformation} of a monomial ideal $M = (m_1,\dots,m_t)$ is a choice of vectors $\{\epsilon_1,\dots, \epsilon_t\}$ where each $\epsilon_i \in \mathbb{R}^n$ (where $n$ is the number of variables), and the following condition is satisfied: \[ m_{is} < m_{js} \mbox{ implies } m_{is}+\epsilon_{is} < m_{js} + \epsilon_{js} \mbox{, and }\]\[ m_{is} = 0 \mbox{ implies } \epsilon_{is} = 0. \]  Here $m_{is}$ is the exponent of $x_x$ in $m_i$.

It is easy to see that for any given monomial ideal $M$ and a deformation of exponents $M_{\epsilon}$, the LCM lattice of $M_{\epsilon}$ is greater than that of $M$ in $\mathcal{L}(n)$ (see example 3.4 (a) in \cite{GPW}).  The converse, which is the following theorem, indicates that deformations can be as ``badly'' behaved as possible.

\begin{theorem}\label{chains=defos}
If $P \geqslant Q$ in $\mathcal{L}(n)$ then there exists a  coordinatization of $Q$ such that via deformation of exponents one can obtain a coordinatization of $P$.
\end{theorem}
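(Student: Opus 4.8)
The plan is to start from the ECCV coordinatization of $P$ introduced in the second example, and to show that it simultaneously gives a natural coordinatization of $Q$ by ``collapsing'' the monomials along fibers of the join-preserving surjection $f\colon P \to Q$. Since $P \geqslant Q$, by definition there is a join-preserving map $f\colon P \to Q$ that is a bijection on atoms (hence surjective). The first step is to set up notation: fix the ECCV coordinatization $\mathcal{M}$ of $P$ in the ring $R = k[x_1,\dots,x_t]$, where $t$ is the number of maximal chains of $P$ and $m_p = \prod_{i : p \in c_i} x_i$. I would then build a coordinatization $\mathcal{M}'$ of $Q$ by, roughly, assigning to each $q \in Q$ the monomial $m'_q$ built from those chains $c_i$ of $P$ whose image $f(c_i)$ passes through $q$ — equivalently, pushing the ECCV-type data forward along $f$. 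The key point to verify is that this pushed-forward labeling still satisfies the two hypotheses of Theorem~\ref{coordinatizations} for $Q$: every meet-irreducible of $Q$ is labeled (because every element of $Q$ is still covered by something, and each variable $x_i$ is used precisely along the chain $f(c_i)$ in $Q$, so it lies along a single chain), and distinct variables meeting forces comparability in $Q$. Care is needed because $f$ can merge incomparable elements of $P$ into comparable ones of $Q$ and can collapse chains, so I would check that $f(c_i)$ is indeed a chain in $Q$ (it is, since $f$ is order-preserving and $c_i$ is a chain) and handle the bookkeeping of repeated chains.

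Once both coordinatizations are in hand, the second step is to exhibit the deformation. Here I would work in exponent space $\mathbb{R}^{t}$ (one variable per maximal chain of $P$). The coordinatization of $Q$ produces generators $x'(a_j) = \prod_{q \in \lceil a_j \rceil_Q^c} m'_q$, and the ECCV coordinatization of $P$ produces $x(a_j) = \prod_{p \in \lceil a_j \rceil_P^c} m_p$. Because $f$ is a bijection on atoms and join-preserving, one has $f^{-1}(\lceil a_j \rceil_Q) \subseteq \lceil a_j \rceil_P$, so every variable appearing in $x'(a_j)$ also appears in $x(a_j)$; the exponent vector of $x(a_j)$ dominates that of $x'(a_j)$ coordinatewise. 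I would then define $\epsilon_j = \exp(x(a_j)) - \exp(x'(a_j)) \in \mathbb{R}^t_{\geqslant 0}$ and check the deformation axioms: that $m_{js}=0$ forces $\epsilon_{js}=0$ (immediate, since a variable absent from $x'(a_j)$ must be shown absent from $x(a_j)$ — this requires that $f^{-1}(\lceil a_j\rceil_Q)$ is in fact \emph{equal} to the set of $p \in \lceil a_j\rceil_P$ lying on a chain through some element of $\lceil a_j\rceil_Q$, so one may need to choose $\mathcal{M}'$ slightly more carefully, e.g. only using those chains of $P$ that are ``saturated'' with respect to $f$), and that strict inequalities among exponents are preserved, which follows because we are only ever \emph{adding} nonnegative amounts and the target is the genuine ECCV coordinatization of $P$, whose LCM lattice is $P$ by the earlier example.

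The third step is to conclude: the deformed ideal $(x(a_1),\dots,x(a_n))$ is exactly the ECCV coordinatization of $P$, hence its LCM lattice is $P$; and we started from a coordinatization of $Q$; so the deformation of exponents carries a coordinatization of $Q$ to a coordinatization of $P$, as claimed. I would also remark that one could instead phrase this using the ``deficit'' labeling $\mathcal{D}$ of Proposition~\ref{realizable} applied to the ECCV ideal of $P$ — since that ideal has LCM lattice $P$, comparing its deficit labeling against the pushed-forward labeling on $Q$ gives the deformation vectors directly — which may streamline the exponent computation.

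I expect the main obstacle to be the second hypothesis of Theorem~\ref{coordinatizations} for the pushed-forward labeling on $Q$, together with the matching ``$m_{js}=0 \Rightarrow \epsilon_{js}=0$'' axiom: both hinge on controlling exactly which chains of $P$ survive, as chains, in $Q$, and on ensuring no variable leaks onto an incomparable pair after $f$ merges elements. Getting the definition of $\mathcal{M}'$ right so that $f$ restricted to each maximal chain of $P$ lands on a maximal chain of $Q$ — or at least so that the variable-support sets are exactly the fibers one needs — is the crux; once that is pinned down, the rest is the routine verification sketched above.
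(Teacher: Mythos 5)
Your overall route is the same as the paper's: give $P$ the ECCV labeling, push it down to $Q$ along the join-preserving, atom-bijective map $f$, check the two conditions of Theorem~\ref{coordinatizations} using that images of chains are chains (Lemma~\ref{mapsOnChains}), and take the deformation vectors to be the coordinatewise differences of exponents, $\epsilon_{ij} = \vert \lceil a_i\rceil_P^c \cap c_j\vert - \vert \lceil a_i\rceil_Q^c \cap f(c_j)\vert$. Two steps in your sketch need repair, though. First, the inclusion you invoke, $f^{-1}(\lceil a_j\rceil_Q) \subseteq \lceil a_j\rceil_P$, is false in general: order-preservation gives $p \geqslant a_j \Rightarrow f(p) \geqslant a_j$, not the converse (e.g.\ collapsing $B_3$ onto the minimal lattice in $\mathcal{L}(3)$ sends $a_1\vee a_2$, which is not above $a_3$, to $\hat{1}$, which is). The true and useful statement is the complementary one, $f^{-1}(\lceil a_j\rceil_Q^c) \subseteq \lceil a_j\rceil_P^c$: if $f(p)\not\geqslant a_j$ then $p\not\geqslant a_j$, so every element of $\lceil a_j\rceil_Q^c \cap f(c_i)$ is the image of some element of $\lceil a_j\rceil_P^c \cap c_i$, which is exactly what gives the coordinatewise domination (hence $\epsilon_{ij}\geqslant 0$) that you want.

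Second, the zero-exponent axiom, which you explicitly leave unresolved (``one may need to choose $\mathcal{M}'$ slightly more carefully\dots saturated chains''), needs no modification of the pushed-forward labeling; it follows directly, and this is how the paper closes the proof. If the exponent of $x_j$ in the $Q$-generator for $a_i$ is zero, i.e.\ $\lceil a_i\rceil_Q^c \cap f(c_j) = \emptyset$, then every element of the chain $f(c_j)$ lies in $\lceil a_i\rceil_Q$; in particular the atom on that chain is $\geqslant a_i$, hence equal to $a_i$, and since $f$ is a bijection on atoms this forces $a_i \in c_j$, so $\lceil a_i\rceil_P^c \cap c_j = \emptyset$ and $\epsilon_{ij}=0$ as required. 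With these two points fixed, your argument coincides with the paper's proof; the ``saturation'' of chains you worry about is a non-issue because the verification only ever compares the chain $c_j$ with its image $f(c_j)$, never with fibers of $f$ over single elements.
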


We will need the following lemma in order to simplify the proof of Theorem \ref{chains=defos}.

\begin{lemma}\label{mapsOnChains}
Let $f: P\rightarrow Q$ be a join preserving map  between finite atomic lattices which is a bijection on atoms.  If $c$ is a chain in $P$ then $f(c)$ is a chain in $Q$.
\end{lemma}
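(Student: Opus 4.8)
The plan is to observe that the hypothesis ``join-preserving'' already forces $f$ to be order-preserving, and that any order-preserving map automatically carries totally ordered subsets to totally ordered subsets; the bijection-on-atoms hypothesis is not needed for this particular statement (it is presumably assumed only because it is the standing hypothesis on the maps appearing in $\mathcal{L}(n)$).

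First I would check that $f$ is monotone. Suppose $p \leqslant p'$ in $P$. Then $p \vee p' = p'$, so applying $f$ and using that it preserves joins gives $f(p') = f(p \vee p') = f(p) \vee f(p')$, which says precisely that $f(p) \leqslant f(p')$ in $Q$. Next, given the chain $c \subseteq P$, take any two elements of the image $f(c)$, say $f(p)$ and $f(p')$ with $p, p' \in c$. Since $c$ is a chain, either $p \leqslant p'$ or $p' \leqslant p$; in the first case the previous observation gives $f(p) \leqslant f(p')$, and in the second $f(p') \leqslant f(p)$. Either way the two chosen elements are comparable, and since they were arbitrary, $f(c)$ is a chain in $Q$.

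The one point worth a remark is that $f$ need not be injective on $c$, so $f(c)$ may be a strictly shorter chain than $c$; but the statement asks only that $f(c)$ be a chain, not that its length be preserved, so there is no real obstacle here. Indeed the whole argument is essentially immediate once one unwinds the definition of a join-preserving map, and the proof should be only a few lines.
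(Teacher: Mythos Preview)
Your proof is correct and follows exactly the paper's approach: the paper simply asserts that ``since $f$ is join preserving and thus order preserving, the image of these elements will be a (possibly shorter) chain in $Q$,'' and you have merely unpacked that one-line remark. Your observation that the bijection-on-atoms hypothesis is unnecessary here is also accurate.
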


\begin{proof}
If $c$ is a chain in $P$ then $c =\{ \hat{0}, a_i, p_1, \dots ,\hat{1}\}$, where the elements are listed in increasing order.  Since $f$ is join preserving and thus order preserving, the image of these elements will be a (possibly shorter) chain in $Q$.
\end{proof} 

Now for the proof of the theorem.

\begin{proof}
First, label $P$ with the ECCV labeling from above.  Then construct a labeling of $Q$ as follows.  Since $P \geqslant Q$ there is a join preserving map $f : P \rightarrow Q$.  To each element $q \in Q$ assign the monomial \[ \prod_{j \in I} x_j, \] where $I = \{j \, | \, x_j \mbox{ divides }m_p \mbox{ for all } p \in f^{-1}(q)\}$.  To see that this is in fact a coordinatization we need to check the two conditions in Theorem \ref{coordinatizations}.  Clearly all meet-irreducibles in $Q$ will be labeled, and since variables $x_j$ only appeared along chains in $P$, in this new labeing they will only appear along the image of that chain under $f$ which by Lemma \ref{mapsOnChains} will also be a chain in $Q$.  Thus, this labeling is in fact a coordinatization.

It remains to show that there exists $\epsilon_i$ for each of the $n$ atoms, such that the monomial ideal obtained for $P$ is a deformation of exponents for the monomial ideal obtained for $Q$ (with these coordinatizations).  We do this by considering chains in both $P$ and $Q$ and their relation to each other under the map $f$.  

Let $c_j$ be the chain in $P$ which is labeled by the variable $x_j$ under the greedy labeling.  Note that we can write the monomial associated to an atom $a_i$ as follows \[ \prod_j \prod_{p\, \in \lceil{a_i}\rceil_P^c \cap c_j}  x_j, \] where the subscript $P$ indicates that both the order ideal and the complement are in $P$.  Similarly with the coordinatization of $Q$ given above we can think of the monomials for the lattice $Q$ as \[ \prod_j \prod_{q \,\in \lceil{a_i}\rceil_Q^c \cap f(c_j)}  x_j. \]  Given these descriptions of the monomials generators for each lattice makes it clear that for $\epsilon_i$ we want to define $$\epsilon_{ij} = \vert \lceil{a_i}\rceil_P ^c \cap c_j \vert - \vert \lceil{a_i}\rceil_Q^c \cap f(c_j)\vert. $$  
Observe that if $\lceil{a_i}\rceil_Q^c \cap f(c_j) = \emptyset$ then every $q \in f(c_j)$ is greater than $a_i$ (in $Q$).  This implies that $a_i$ is an element in the chain $f(c_j)$ and since $f$ is a bijection on atoms this means that $a_i$ is an element of the chain $c_j$ in $P$.  Thus $\lceil{a_i}\rceil_P^c \cap c_j = \emptyset$ also.  Thus if the exponent on $x_j$ for the monomial $m_i$ is $0$ then $\epsilon_{ij} = 0$.  This collection of $\epsilon_{ij}$ gives a deformation of exponents from a monomial ideal whose LCM lattice is $Q$ to one whose LCM lattice is $P$.  
\end{proof}

Note that this proof uses the fact that we can represent any deformation of exponents using integer vectors rather than working with real exponents.  An obvious corollary to this is the following.

\begin{corollary}
Given a lattice $Q \in \mathcal{L}(n)$ there exists a coordinatization for which every element in $\lceil{Q}\rceil$ is the LCM lattice of a deformation of exponents of that coordinatization.
\end{corollary}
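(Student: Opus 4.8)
The plan is to build a single coordinatization of $Q$ by gluing together, on pairwise disjoint sets of variables, the individual coordinatizations supplied by Theorem~\ref{chains=defos}, and then to realize each $P \in \lceil Q \rceil$ by deforming only the block of variables attached to that $P$. The point is that ``disjoint gluing of coordinatizations'' should correspond to ``join in $\mathcal{L}(n)$'', so deforming one block up to $P$ while leaving the others at $Q$ lands on $P \vee Q = P$.

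First I would note that $\mathcal{L}(n)$ is finite, hence $\lceil Q \rceil$ is a finite set, say $\lceil Q \rceil = \{P^{(1)}, \dots, P^{(k)}\}$. For each $i$, Theorem~\ref{chains=defos} applied to $P^{(i)} \geqslant Q$ produces a coordinatization $\mathcal{M}_i$ of $Q$ together with a deformation of the resulting monomial ideal whose LCM lattice is $P^{(i)}$; I would arrange that $\mathcal{M}_i$ uses its own set of variables $X_i$, the $X_i$ being pairwise disjoint. Let $\mathcal{M}$ be the labeling of $Q$ whose label at $p$ is the product of the $\mathcal{M}_i$-labels of $p$ over all $i$. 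Because the $X_i$ are disjoint, $\mathcal{M}$ still satisfies both conditions of Theorem~\ref{coordinatizations}: each meet-irreducible is already labeled non-trivially by every $\mathcal{M}_i$; and $\gcd(m_p, m_q)$ factors as the product of the blockwise gcd's, so if it is nontrivial then some $\mathcal{M}_i$ already forces $p$ and $q$ to be comparable. Thus $\mathcal{M}$ is a coordinatization of $Q$ (still with $n$ generators, just in a larger ring), and each of its generators factors blockwise as $x(a) = \prod_i x_i(a)$, where $x_i(a)$ is the generator of the atom $a$ in the coordinatization $\mathcal{M}_i$.

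Next, fixing $i$, I would deform $M_{Q,\mathcal{M}}$ by applying on the variables $X_i$ the deformation from Theorem~\ref{chains=defos} that realizes $P^{(i)}$ and leaving every variable in $X_j$, $j \neq i$, untouched. Since the $X_i$-exponents of $M_{Q,\mathcal{M}}$ agree with those of $M_{Q,\mathcal{M}_i}$, and the zero vector is always an admissible deformation in the remaining coordinates, this is a legitimate deformation of $M_{Q,\mathcal{M}}$, and the deformed ideal $M'$ again has generators factoring over the blocks: on $X_i$ it has LCM lattice $P^{(i)}$, and on each $X_j$ with $j \neq i$ it is unchanged, with LCM lattice $Q$. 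The one statement needing a genuine argument is a \emph{join lemma}: if the generators of a monomial ideal factor over a partition of the variable set, then its LCM lattice is the join in $\mathcal{L}(n)$ of the LCM lattices of the blocks. I would prove this through the support-set picture of Section~\ref{background}: writing $\mathrm{cl}(\sigma)$ for the support of the smallest lattice element above a set $\sigma$ of atoms, disjointness of the blocks gives $\mathrm{cl}(\sigma) = \bigcap_j \mathrm{cl}_j(\sigma)$, where $\mathrm{cl}_j$ is the closure coming from the $j$-th block; ranging over all $\sigma$ (and using that each block's set system contains the full atom set) identifies the set system of the whole LCM lattice with the intersection-closure of the union of the block set systems, which is exactly their join. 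Applied to $M'$ this yields $\mathrm{LCM}(M') = P^{(i)} \vee Q \vee \cdots \vee Q = P^{(i)}$, since $P^{(i)} \geqslant Q$. As $i$ was arbitrary, every element of $\lceil Q \rceil$ is the LCM lattice of a deformation of the single coordinatization $M_{Q,\mathcal{M}}$, which is the assertion of the corollary.

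I expect the main obstacle to be precisely this join lemma. The rest — finiteness of $\lceil Q \rceil$, verifying the two conditions of Theorem~\ref{coordinatizations} for the glued labeling, and checking that a block-supported deformation really is a deformation — is routine bookkeeping. What actually makes the construction work is the identification of disjoint gluing with joins in $\mathcal{L}(n)$: it is this that forces the block-by-block deformation to land on exactly $P^{(i)}$ rather than on $Q$ or on some lattice strictly in between, and although it is elementary once phrased via support sets, it is the only non-formal ingredient.
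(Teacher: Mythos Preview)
Your argument is correct, but it takes a genuinely different route from the paper. The paper does not glue anything: it observes that a \emph{single} coordinatization of $Q$, namely the one obtained by pushing the ECCV labeling of $B_n$ down along the join-preserving map $B_n \to Q$, already works for every $P' \in \lceil Q \rceil$ simultaneously. For a given $P'$ one factors $B_n \to P' \to Q$, pushes the ECCV labeling of $B_n$ down to $P'$, and reads off the deformation exactly as in the proof of Theorem~\ref{chains=defos}, with $\epsilon_{ij} = |\lceil a_i\rceil_{P'}^c \cap c_j| - |\lceil a_i\rceil_Q^c \cap f(c_j)|$ where $c_j$ now runs over maximal chains of $B_n$. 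No new lemma is needed; the corollary is essentially a one-line specialization of the theorem to $P=B_n$.

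By contrast, your construction assembles one block of variables for each element of $\lceil Q \rceil$ and invokes a join lemma identifying the LCM lattice of a block-factored ideal with the join of the block LCM lattices. That lemma is true and your sketch via closures is fine, so the proof goes through; the trade-off is that you use Theorem~\ref{chains=defos} purely as a black box (nice modularity) at the cost of introducing an auxiliary lemma and an enormous ambient ring, whereas the paper reuses the internal structure of that proof to get a uniform, explicit coordinatization on just $n!$ variables.
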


\begin{proof}
Apply the coordinatization used to prove Theorem \ref{chains=defos} where $Q$ is your given lattice and $P = B_n$.  Use the same coordinatization given in the proof for every element $P' \in \lceil{Q}\rceil$.  Now it remains to show that with these coordinatizations of $P'$ and of $Q$, we can find an $\epsilon_i$ for each of the $n$ atoms giving a deformation of exponents for the monomial ideal obtained for $Q$.  Again for a variable $x_j$ which appears only along one chain $c_j$ we define $\epsilon_{ij}$ to be $\vert \lceil{a_i}\rceil_{P'} ^c \cap c_j \vert - \vert \lceil{a_i}\rceil_Q^c \cap f(c_j)\vert$.
\end{proof}

Together these results give evidence that deformations of exponents might be best understood in the context of the set $\mathcal{L}(n)$.  In particular given an ``abstract monomial ideal'' $L$, the set of all possible deformations is the set $\lceil L \rceil$.  However for a given coordinatization the set $\lceil L \rceil$ may be much larger than the actual set of deformations that occur, nevertheless it gives a good starting point.  Alternatively one could view all possible deformations for a given monomial ideal as a fan in exponent space.  Specifically if $M$ is in the polynomial ring with $n$ variables and has $k$ monomial generators then it sits in the intersection of hyperplanes in $\mathbb{R}^{mk}$ which are defined by multiple generators having the same exponent on a variable.  In this setting, deformations correspond to moving off these intersections, but various partial deformations might yield a combinatorially equivalent monomial ideal.  If we compare these two approaches we can quickly see that the set $\lceil L \rceil \in \mathcal{L}(n)$ only lists possible deformations up to combinatorial type thus making it a simpler set to study than the fan.

\section{Structure of $\mathcal{L}(n)$}\label{structure}
In this section, we study some basic properties of $\mathcal{L}(n)$.  Counting arguments show that $\vert \mathcal{L}(3) \vert = 8$ and $\vert \mathcal{L}(4) \vert = 545$, and by using a reverse search algorithm on a computer one can see that $\vert \mathcal{L}(5)\vert = 702,\!525$ and $\vert \mathcal{L}(6)\vert = 66,\!960,\!965,\!307$ (see appendix A in \cite{mapesThesis}).  Thus the complexity of $\mathcal{L}(n)$ rapidly increases with $n$.  Still there are nice properties that we can show about $\mathcal{L}(n)$ which give it some extra structure.

Most importantly it is necessary to first understand what covering relations look like in $\mathcal{L}(n)$.

\begin{proposition}\label{coveringRelations}
If $P \geqslant Q$ in $\mathcal{L}(n)$ then $P$ covers $Q$ if and only if $\vert P \vert = \vert Q \vert +1$.
\end{proposition}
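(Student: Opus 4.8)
The plan is to prove both directions by working with the set-of-sets description $\mathcal{S}_P$ of finite atomic lattices from Section~\ref{background}, under which the order relation $P \geqslant Q$ corresponds (essentially) to an inclusion of the associated set systems after identifying atoms; more precisely, a join-preserving map $f : P \to Q$ that is a bijection on atoms translates into the statement that every subset of $\{1,\dots,n\}$ realized as a support in $Q$ is also realized in $P$, so $\mathcal{S}_Q \subseteq \mathcal{S}_P$. (This is exactly the viewpoint used in the proof of Theorem~\ref{Ln}: the meet in $\mathcal{L}(n)$ is intersection of set systems, and atoms are obtained by adjoining a single extra subset.) Under this translation, $|P|$ and $|Q|$ are just $|\mathcal{S}_P|$ and $|\mathcal{S}_Q|$, and $P \geqslant Q$ with $P \neq Q$ means $\mathcal{S}_Q \subsetneq \mathcal{S}_P$.

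The easy direction is that $|P| = |Q| + 1$ implies $P$ covers $Q$: if $P > R > Q$ then $\mathcal{S}_Q \subsetneq \mathcal{S}_R \subsetneq \mathcal{S}_P$, forcing $|P| \geqslant |Q| + 2$, a contradiction. The substantive direction is the converse: assuming $P > Q$ and $|P| \geqslant |Q| + 2$, I must produce an intermediate lattice $R \in \mathcal{L}(n)$ with $P > R > Q$. The natural candidate is to remove a single, carefully chosen element from $\mathcal{S}_P$ — i.e. take $\mathcal{S}_R = \mathcal{S}_P \setminus \{\sigma\}$ for some $\sigma \in \mathcal{S}_P \setminus \mathcal{S}_Q$ — and the content of the argument is that $\sigma$ can be chosen so that $\mathcal{S}_R$ is still closed under intersection (it automatically still contains $\emptyset$, the full set, and all singletons, since those all lie in $\mathcal{S}_Q$). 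For $\mathcal{S}_P \setminus \{\sigma\}$ to be intersection-closed it suffices that $\sigma$ is \emph{not} an intersection of two other members of $\mathcal{S}_P$ that remain, i.e. $\sigma$ should be a meet-irreducible element of the lattice $P$ — removing a meet-irreducible from an intersection-closed family keeps it intersection-closed. So the key step is:

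\begin{quote}
\emph{Claim:} If $\mathcal{S}_Q \subsetneq \mathcal{S}_P$, then there exists a meet-irreducible element $\sigma$ of $P$ with $\sigma \in \mathcal{S}_P \setminus \mathcal{S}_Q$.
\end{quote}

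To prove the claim, consider $\mathcal{S}_P \setminus \mathcal{S}_Q$, pick a member $\tau$ of it that is \emph{maximal} with respect to inclusion among non-$\hat 1$ elements (note $\hat 1$ and all atoms lie in $\mathcal{S}_Q$, so $\tau$ exists and is a proper, non-atom, non-top subset), and argue $\tau$ is meet-irreducible in $P$: if $\tau = \alpha \cap \beta$ with $\alpha, \beta \in \mathcal{S}_P$ strictly containing $\tau$, then by maximality $\alpha, \beta \in \mathcal{S}_Q$, hence $\tau = \alpha \cap \beta \in \mathcal{S}_Q$ since $\mathcal{S}_Q$ is intersection-closed — contradiction. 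Thus $\tau$ is meet-irreducible, and setting $\sigma = \tau$, $\mathcal{S}_R := \mathcal{S}_P \setminus \{\sigma\}$ defines a lattice $R \in \mathcal{L}(n)$ with $\mathcal{S}_Q \subseteq \mathcal{S}_R \subsetneq \mathcal{S}_P$; moreover $\mathcal{S}_R \supsetneq \mathcal{S}_Q$ is proper exactly when $|P| \geqslant |Q| + 2$. This gives $P > R \geqslant Q$ with $P \neq R$, so $P$ does not cover $Q$, completing the contrapositive.

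The step I expect to be the main obstacle is being careful about the dictionary between the order $\geqslant$ on $\mathcal{L}(n)$ (defined via join-preserving atom-bijections) and inclusion of set systems $\mathcal{S}_P$: one must check that removing a meet-irreducible subset yields a set system whose associated lattice $R$ genuinely satisfies $P \geqslant R \geqslant Q$ in the sense of Theorem~\ref{Ln}'s partial order — i.e. that the inclusion $\mathcal{S}_Q \subseteq \mathcal{S}_R \subseteq \mathcal{S}_P$ really does correspond to the existence of the required join-preserving surjections $P \twoheadrightarrow R \twoheadrightarrow Q$. This is implicit in the set-of-sets formalism but deserves an explicit sentence. Everything else — intersection-closedness, the maximality argument, the cardinality bookkeeping — is routine once that correspondence is pinned down.
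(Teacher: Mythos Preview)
Your argument is correct, and it takes a genuinely different route from the paper's. The paper works \emph{upward} from $Q$: assuming $\mathcal{S}_P = \mathcal{S}_Q \cup \{\sigma,\beta\}$ differs by exactly two subsets, it produces an intermediate $T$ by adjoining a single subset to $\mathcal{S}_Q$ (namely $\sigma\cap\beta$, or $\sigma$, or $\beta$, according to a short case split on how $\sigma$ and $\beta$ sit relative to one another), checking in each case that the result is still intersection-closed. You instead work \emph{downward} from $P$: pick $\tau$ maximal in $\mathcal{S}_P\setminus\mathcal{S}_Q$, observe that maximality forces $\tau$ to be meet-irreducible in $P$ (since any two strictly larger members of $\mathcal{S}_P$ already lie in $\mathcal{S}_Q$ and hence so would their intersection), and delete it. Your approach buys uniformity---no case analysis, and the argument handles $|P|\ge |Q|+2$ in one stroke, whereas the paper's written proof treats only the gap-two case and leaves the general case implicit. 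The paper's approach has the minor advantage that it never needs the notion of meet-irreducible and stays entirely within elementary set manipulations. Your flagged concern about the dictionary between $P\geqslant Q$ in $\mathcal{L}(n)$ and the inclusion $\mathcal{S}_Q\subseteq\mathcal{S}_P$ is legitimate but easily discharged: given a join-preserving atom-bijection $f:P\to Q$, each $q\in Q$ has $\support(q)=\support\bigl(\bigvee_{i\in\support(q)}^{P}a_i\bigr)$ (apply $f$ and use that $f$ preserves joins and fixes atoms), so $\mathcal{S}_Q\subseteq\mathcal{S}_P$; the converse is immediate by sending $p$ to the join in $Q$ of its supporting atoms.
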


\begin{proof}
Clearly $P \geqslant Q$ which implies that $\mathcal{S}_Q \subset \mathcal{S}_P$.  Since every set in $\mathcal{S}$ corresponds to an element in the associated lattice, this implies that $\vert P \vert \geqslant \vert Q \vert$.  It remains to show that they differ by one element when $P$ covers $Q$.  

Suppose they differ by $2$ elements, then $\mathcal{S}_P = \mathcal{S}_Q \cup \{\sigma, \beta\}$ where $\sigma$ and $\beta$ are subsets of $\{1,\dots,n\}$ satisfying the conditions that $\sigma \cap \beta$, $\sigma \cap s$, and $\beta \cap s$ are either in $\mathcal{S}_Q$ or $\{\sigma, \beta\}$ for all $s \in \mathcal{S}_Q$.  The argument is that in this case $P$ cannot cover $Q$ as there exists a lattice $T \not= P$ satisfying $P \geqslant T \geqslant Q$.  Let $\mathcal{S}_T$ be one of the following  
\begin{itemize}
\item $\mathcal{S}_Q \cup \{\sigma \cap \beta \}$ if $\sigma \cap \beta \not \in \mathcal{S}_Q$ 
\item $\mathcal{S}_T = \mathcal{S}_Q \cup \{\sigma \}$ if $\sigma \subset \beta$
\item $\mathcal{S}_T = \mathcal{S}_Q \cup \{ \beta \}$ if $\beta \subset \sigma$
\item $\mathcal{S}_T = \mathcal{S}_Q \cup \{\sigma\}$ or $\mathcal{S}_T = \mathcal{S}_Q \cup \{\beta \}$ if $\sigma$ and $\beta$ are not subsets of each other.
\end{itemize}

In any of these cases, $T \geqslant Q$ and $\vert T \vert = \vert Q \vert +1$.

\end{proof}  

The upshot of Proposition \ref{coveringRelations} is the next nice result.  It is easy to see that $\mathcal{L}(3) = B_3$ (the boolean lattice on 3 atoms), whereas $\mathcal{L}(4) \not = B_4$ (and the latter is true for all $n \geqslant 4$ by Proposition \ref{meetIrredLn}).  However, one can ask, what if any are the nice properties of $B_n$ that are retained by $\mathcal{L}(n)$.  One answer is the following theorem.  

\begin{theorem}\label{gradedLn}
$\mathcal{L}(n)$ is a graded lattice of rank $2^n -n -2$, ie. this is the length of all maximal chains. 
\end{theorem}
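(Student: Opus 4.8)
The plan is to prove that $\mathcal{L}(n)$ is graded by showing that every maximal chain from $\hat{0}$ to $\hat{1}$ has length exactly $2^n - n - 2$. The key tool is Proposition~\ref{coveringRelations}, which says that a covering relation $P \gtrdot Q$ in $\mathcal{L}(n)$ is exactly a one-element difference $|P| = |Q| + 1$ of the associated set systems $\mathcal{S}_Q \subset \mathcal{S}_P$. Given this, the length of a saturated chain from $Q$ to $P$ is forced to be $|P| - |Q|$: each cover step adds precisely one subset. So if I can compute $|\hat{0}|$ and $|\hat{1}|$ I am done.

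First I would identify the endpoints. The minimal element $\hat{0}$ of $\mathcal{L}(n)$ is the lattice in which the atoms are also the coatoms; as a set system this is $\mathcal{S}_{\hat{0}} = \{\emptyset, \{1\}, \dots, \{n\}, \{1,\dots,n\}\}$, which has $n + 2$ elements. The maximal element $\hat{1}$ is the Boolean lattice $B_n$, whose set system is the full power set of $\{1,\dots,n\}$, with $2^n$ elements. Hence for any maximal chain $\hat{0} = L_0 \lessdot L_1 \lessdot \cdots \lessdot L_k = \hat{1}$, applying Proposition~\ref{coveringRelations} at each step gives $|L_{i+1}| = |L_i| + 1$, so $k = |\hat{1}| - |\hat{0}| = 2^n - (n+2) = 2^n - n - 2$. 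This shows all maximal chains have the same length, which is precisely the statement that $\mathcal{L}(n)$ is graded of that rank, with rank function $L \mapsto |L| - (n+2)$.

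The one genuine gap to fill is that a \emph{maximal} chain (one that cannot be refined or extended) is the same thing as a \emph{saturated} chain from $\hat{0}$ to $\hat{1}$, i.e. that one cannot get ``stuck'' partway. This needs the observation that $\hat{0}$ and $\hat{1}$ are genuinely the bottom and top (already established in the proof of Theorem~\ref{Ln}), so any maximal chain must start at $\hat{0}$ and end at $\hat{1}$; and that consecutive elements of a maximal chain form covering relations by definition of maximality. Then Proposition~\ref{coveringRelations} converts each covering relation into a cardinality increment of exactly one. I should also note in passing that maximal chains exist and that $\mathcal{L}(n)$ has finite height, both of which are immediate from finiteness of $\mathcal{L}(n)$.

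I expect no serious obstacle here: the content is entirely front-loaded into Proposition~\ref{coveringRelations}, and the present theorem is essentially a corollary once the cardinalities of the extreme elements are pinned down. The only thing to be a little careful about is making the logical equivalence ``$\mathcal{L}(n)$ graded'' $\iff$ ``all maximal chains have equal length'' explicit, and to record the rank function $\rho(L) = |\mathcal{S}_L| - n - 2$ so that it is manifestly order-preserving and increases by one along covers (this is immediate from $\mathcal{S}_Q \subseteq \mathcal{S}_P$ when $Q \leqslant P$). I would write the proof in three short sentences: state the rank function, invoke Proposition~\ref{coveringRelations} to see it jumps by one along each cover, and evaluate it at $\hat{0}$ and $\hat{1}$ to get the rank $2^n - n - 2$.
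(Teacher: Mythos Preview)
Your proposal is correct and follows essentially the same approach as the paper: identify the cardinalities of the bottom and top elements as $n+2$ and $2^n$, then invoke Proposition~\ref{coveringRelations} so that each covering step increases cardinality by exactly one, forcing every maximal chain to have length $2^n-n-2$. The paper's own proof is the same three-line argument, without the explicit rank function or the side remarks about saturated versus maximal chains.
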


\begin{proof}
The maximal element of $\mathcal{L}(n)$ is the lattice $B_n$ and $\vert B_n \vert = 2^n$.  The minimal element of $\mathcal{L}(n)$ is the unique lattice on $n$ atoms where the atoms are also the coatoms, it has $n+2$ elements.  Then by \ref{coveringRelations} every maximal chain in $\mathcal{L}(n)$ has length $2^n -(n+2)$ and so it is graded of rank $2^n-n-2$.  
\end{proof}

It follows from Theorem \ref{gradedLn} that if $\mathcal{L}(n)$ is co-atomic then it will be isomorphic to $B_n$.  With the following description of the meet-irreducibles it is easy to see that the only case where this happens is for $n=3$.

\begin{proposition}\label{meetIrredLn}
The number of meet irreducibles in $\mathcal{L}(n)$ is $$n(2^{n-1} -n).$$
\end{proposition}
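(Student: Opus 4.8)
Throughout write $[n]=\{1,\dots,n\}$ and identify each $L\in\mathcal{L}(n)$ with its set system $\mathcal{S}_L\subseteq 2^{[n]}$. I will use from Section~\ref{defoSection} that $Q\leqslant P$ in $\mathcal{L}(n)$ iff $\mathcal{S}_Q\subseteq\mathcal{S}_P$, that the meet of two elements is the intersection of their set systems, and (by Proposition~\ref{coveringRelations}) that the elements covering $\mathcal{S}$ are exactly those of the form $\mathcal{S}\cup\{\tau\}$ with $\tau\notin\mathcal{S}$ and $\mathcal{S}\cup\{\tau\}$ closed under intersection; call such a $\tau$ \emph{addable to $\mathcal{S}$}. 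Since $(\mathcal{S}\cup\{\tau_1\})\cap(\mathcal{S}\cup\{\tau_2\})=\mathcal{S}$ whenever $\tau_1\neq\tau_2$ are both addable, an element $\mathcal{S}$ (other than $\hat{1}=B_n$, which as usual is not counted in $\mi$) is meet-irreducible precisely when it has exactly one addable set. The plan is to show these are exactly the set systems $2^{[n]}\setminus[\tau,[n]\setminus\{p\}]$, where $[\tau,[n]\setminus\{p\}]$ denotes the interval of $2^{[n]}$, for $p\in[n]$ and $\tau\subseteq[n]\setminus\{p\}$ with $|\tau|\geqslant 2$; counting such pairs $(p,\tau)$ then gives $n\bigl(2^{n-1}-\binom{n-1}{0}-\binom{n-1}{1}\bigr)=n(2^{n-1}-n)$.

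The one elementary input I would isolate first is that every \emph{minimal} non-closed set $\rho$ (meaning $\rho\notin\mathcal{S}$ but every proper subset of $\rho$ lies in $\mathcal{S}$) is addable, since for $s\in\mathcal{S}$ the set $\rho\cap s$ equals either $\rho$ or a proper subset of $\rho$, hence lies in $\mathcal{S}\cup\{\rho\}$. It follows that if $\mathcal{S}$ is meet-irreducible then it has a \emph{unique} minimal non-closed set $\tau$; that every non-closed set contains $\tau$ (equivalently, every subset of $[n]$ not containing $\tau$ is closed); and that $|\tau|\geqslant 2$, since $\emptyset$ and the singletons are closed.

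Next let $\bar\tau$ be the smallest closed set strictly containing $\tau$; it exists because $\{s\in\mathcal{S}:s\supseteq\tau\}$ is nonempty and closed under intersection while $\tau\notin\mathcal{S}$, and every closed $s\supseteq\tau$ then satisfies $s\supseteq\bar\tau$. I would prove, by the recurring device ``otherwise one exhibits a second addable set'': (i) every set containing $\bar\tau$ is closed --- otherwise, for a set $Y$ minimal among the non-closed sets containing $\bar\tau$ and any $s\in\mathcal{S}$, either $\tau\not\subseteq s$ (so $Y\cap s$ omits $\tau$, hence is closed) or $\tau\subseteq s$ (so $s\supseteq\bar\tau$, whence $\bar\tau\subseteq Y\cap s\subsetneq Y$ forces $Y\cap s$ closed by minimality of $Y$), so $Y$ would be addable with $Y\supseteq\bar\tau\supsetneq\tau$; and (ii) $\bar\tau=\tau\cup\{p\}$ for a single element $p$ --- otherwise, for $p\in\bar\tau\setminus\tau$ the set $\tau\cup\{p\}$ is non-closed (it does not contain $\bar\tau$) and is addable, since its only proper subset containing $\tau$ is $\tau$ itself and $(\tau\cup\{p\})\cap s=\tau$ would force $\tau\subseteq s\not\ni p$, impossible as then $s\supseteq\bar\tau\ni p$. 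Combining (i), (ii), and the fact that every set not containing $\tau$ is closed, the non-closed sets of $\mathcal{S}$ are exactly $\{X:\tau\subseteq X,\ p\notin X\}=[\tau,[n]\setminus\{p\}]$, so $\mathcal{S}=2^{[n]}\setminus[\tau,[n]\setminus\{p\}]$ with $p\notin\tau$ and $|\tau|\geqslant 2$.

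For the converse, for each such $(p,\tau)$ I would verify that $\mathcal{S}_{p,\tau}:=2^{[n]}\setminus[\tau,[n]\setminus\{p\}]$ is a meet-irreducible element of $\mathcal{L}(n)$: the removed interval contains neither $\emptyset$, $[n]$, nor any singleton (as $|\tau|\geqslant 2$ and $p\notin\tau$); it is closed under intersection because if $Y,Z\in\mathcal{S}_{p,\tau}$ have $\tau\subseteq Y\cap Z$ then each of $Y,Z$ must contain $p$, so $Y\cap Z$ does too and cannot lie in $[\tau,[n]\setminus\{p\}]$; its unique minimal non-closed set is $\tau$, which is therefore addable; and no other removed set $\tau\cup S$ with $\emptyset\neq S\subseteq[n]\setminus(\tau\cup\{p\})$ is addable, since for $j\in S$ the closed set $[n]\setminus\{j\}$ meets $\tau\cup S$ in the strictly smaller non-closed set $\tau\cup(S\setminus\{j\})$. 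Thus $(p,\tau)\mapsto\mathcal{S}_{p,\tau}$ is a bijection onto $\mi(\mathcal{L}(n))$ --- injectivity holds because the removed interval, hence its top $[n]\setminus\{p\}$ and bottom $\tau$, is recovered from $\mathcal{S}_{p,\tau}$ --- and counting the pairs as above gives $n(2^{n-1}-n)$. The main obstacle is steps (i)--(ii): showing that the non-closed sets of a meet-irreducible lattice form precisely a Boolean interval capped by a coatom. Everything else --- the reduction to addable sets, the converse verification, and the arithmetic --- should be routine.
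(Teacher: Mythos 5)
Your proof is correct, and it arrives at exactly the paper's characterization: the meet-irreducibles are the lattices $B_n$ minus a Boolean interval $[\sigma,\{1,\dots,\hat{\imath},\dots,n\}]$ with $|\sigma|\geqslant 2$, counted as $n(2^{n-1}-n)$. The difference is in how the hard containment $\mi(\mathcal{L}(n))\subseteq X$ is established. The paper reduces, as you do, to ``exactly one set can be added keeping intersection-closure,'' but then runs a case analysis on candidate sets $\gamma$ by type (incomparable to $\sigma$, contained in $\sigma$, containing $\sigma$), arguing inductively on size that all sets of the first two types must already lie in $L$ and that the type-3 sets present are exactly those containing a fixed $i$; this is the somewhat intricate part of the published argument. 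You instead isolate the lemma that every minimal non-closed set is addable (so a meet-irreducible has a unique minimal non-closed set $\tau$ and every non-closed set contains it), and then work with the closure $\bar\tau$, the smallest closed superset of $\tau$: your steps (i) and (ii) show that every superset of $\bar\tau$ is closed and that $\bar\tau=\tau\cup\{p\}$, which pins the complement down as the interval $[\tau,[n]\setminus\{p\}]$ in one stroke. This closure-operator organization is cleaner and replaces the paper's layered induction with two short contradiction arguments, each producing a second addable set; your converse verification (using the closed set $[n]\setminus\{j\}$ to rule out other addable sets) plays the same role as the paper's use of $\sigma\cup\{i\}$, and the counting is identical. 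The only points left implicit --- that one cover implies meet-irreducible (since any element above $\mathcal{S}$ contains a cover) and that containment of set systems realizes the order on $\mathcal{L}(n)$ --- are also taken for granted in the paper, so they are not gaps relative to it.
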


\begin{proof}

A lattice $P \in \mathcal{L}(n)$ is meet irreducible if it is covered by only one element, call it $Q$.  Thinking of $P$ as a collection of subsets $\mathcal{S}_P$ then by Proposition \ref{coveringRelations} that means that there is only one way to add a subset to $\mathcal{S}_P$ so that the resulting set of sets is closed under intersections.  

Let $X$ be the set of all lattices $L_{i,\sigma} \in \mathcal{L}(n)$ where $\vert \sigma \vert \geqslant 2$, $1 \leqslant i \leqslant n$  and  $L_{i,\sigma} = B_n - [\sigma, \{1,\dots ,\hat{i},\dots,n\}]$.  To see that  $L_{i,\sigma} \in \mathcal{L}(n)$ first observe that it has $n$ atoms, a maximal element and a minimal element.  It remains to show that it is a meet-semilattice.  To show this assume that there are two elements $a,b$ in $L_{i, \sigma}$ which do not have a meet,  i.e. their meet was in $ [\sigma, \{1,\dots ,\hat{i},\dots,n]\}$.  If this is the case then both $a \geqslant \sigma$ and $b \geqslant \sigma$ but both are not comparable to $\{1,\dots ,\hat{i},\dots,n\}$, so $i \in a$ and $i \in b$.  This means that the meet of $a$ and $b$ should contain $\sigma$ and $\{i\}$ thus their meet cannot be in $[\sigma, \{1,\dots ,\hat{i},\dots,n\}]$.  So $L_{i,\sigma}$ is a meet semi-lattice.

Now we need to show that $X = \mi(\mathcal{L}(n))$.  To see that $X \subset \mi(\mathcal{L}(n))$ observe that the only candidates for elements to add to $L_{i,\sigma}$ come from $[\sigma, \{1,\dots ,\hat{i},\dots,n\}]$.  Choose such a $\gamma$ such that $\sigma \subset \gamma$.  Observe that $\sigma \cup \{i\} \in L_{i,\sigma}$ from the above discussion.  So $\gamma \cap (\sigma \cup \{i\}) = \sigma$ which shows that if one cannot only add $\gamma$ to $L_{i, \sigma}$ and still have a set which is closed under intersections.  Thus the only element covering $L_{i, \sigma}$ is $L_{i,\sigma}\cup \sigma$.

To see that $\mi(\mathcal{L}(n)) \subset X$, let $L \in \mi(\mathcal{L}(n))$ this means that there is only one element $L' = L \cup \{\sigma\}$ which covers $L$.  From this we can deduce that at minimum $L$ contains a minimal element, a maximal element and all $n$ atoms and does not include $\sigma$.  Furthermore since this is the only covering element, we can deduce that for every subset $\gamma \subset \{1,\dots,n\}$ not in $L$ and not equal to $\sigma$ there exists a subset $\alpha \subset \{1,..,n\}$ in $L$ such that $\gamma \cap \alpha$ is not in $L$.  Thus adding $\gamma$ to $L$ would not produce a covering relation.  To see what conditions this forces on $L$ consider three types of subsets $\gamma$:

\vskip.2in
\noindent{\bf Type 1:  } $\gamma$ is not comparable to $ \sigma$ \\
{\bf Type 2:  } $\gamma \subset \sigma$\\
{\bf Type 3:   } $\sigma \subset \gamma$\\

Considering the subsets $\gamma$ of type 1 of size 2, it is clear that all of them must be elements of $L$ otherwise $L \cup \{\gamma\}$ would also cover $L$.  Once we have all the subsets of size 2, the argument applies for all subsets of type 1 with size 3, and so on until $n-1$.  So now $L$ must consist of all of $\hat{0}, \hat{1}$, the atoms, and all subsets of type 1.

Now, with all the subsets of type 1 in L consider the fact that $L'$ needs to cover $L$ or in other words $\sigma$ intersect any subset of type 1 needs to already be in $L$.  Since we have all subsets that do not include or not included by $\sigma$ this means that we need all subsets of sigma in $L$.  So, now $L$ must consist of $\hat{0}, \hat{1}$, the atoms, and all subsets of type 1 and 2.

Finally we need to consider subsets of type 3.  Again if we start by considering such subsets which differ from $\sigma$ by only one element (i..e $\gamma = \sigma \cup \{ i\}$) it is easy to see that only one such $\gamma$ can be included in $L$ since if there were more than one then $L$ would not be closed under intersections since $\sigma \not \in L$.  Moreover, one such element must be included in $L$ otherwise there are more than one lattice covering $L$.  Similar arguments apply for $\gamma'$ where $\sigma \subset \gamma'$ and $\vert \gamma' \vert \geqslant \vert \sigma \vert +2$.  Although here it is necessary that if $\vert \gamma' \vert = \vert \sigma \vert +2$ that only the $\gamma'$ which include the $\gamma = \sigma \cup \{i\}$ already included into $L$ be included as well, and so on.  So, we see that all of the elements so all of the elements of type 3 that we've included contain $i$, so it must be true that we've included everything in $B_n$ except for the elements greater than $\sigma$ and less than $\{1,\dots, \hat{i},\dots,n\}$ since these are precisely the elements of type three not containing $i$.   Thus we have shown that $\mi(\mathcal{L}(n)) \subset X$.   

Now, all that remains is to count the number of elements in $X$.  First fix $i$, there we see that the number of possible $L_{i,\sigma}$ lattices is $2^{n-1} - n$.  This is because the number of subsets of size 2 or more in a set of $n-1$ elements is $2^{n-1}-(n-1)-1$.  Finally we see that $\vert X \vert = n(2^{n-1} -n)$ by letting $i$ range from 1 to $n$.
\end{proof}

\section{Relationship to cellular resolutions}\label{cellResSect}

In this section we prove the main results of this paper which focus on how the geometric structure of  minimal resolutions stabilize as one moves up chains in a fixed total Betti stratum.  The approach is to first understand the simpler but well understood cases where a minimal resolution can be supported on a simplicial complex, in particular the Scarf complex.  We then show how theorems of this type can be generalized to non simplicial complexes.  First we must introduce some semantics and notation.  

Due to the formulas for multigraded Betti numbers in terms of order complexes of intervals in the LCM lattice(see Theorem 2.1 in \cite{GPW}), we can refer to the Betti numbers of a finite atomic lattice as opposed to the Betti numbers of a monomial ideal.  Specifically for $p \in \LCM(M)$ the formula for computing multigraded Betti numbers in homological degree $i$ is:
$$b_{i,p}(R/M) = \dim \tilde{H}_{i-2}(\Delta (\hat{0},p); k),$$ where $M$ is a monomial ideal whose LCM lattice is $P$, and $\Delta (\hat{0},p)$ is the order complex of the open interval from $\hat{0}$ to $p$.  Note also that we can substitute the cross cut complex $\Gamma$ defined in section \ref{background} for $\Delta$.  Since all monomial ideals with the same LCM lattice have isomorphic minimal resolutions this means that if a cell complex supports the minimal resolution of one ideal then it will support the minimal resolution for all possible coordinatizations of the LCM lattice of that ideal.  In this sense, we will discuss the cellular resolution of a finite atomic lattice.  

Notice that in a finite atomic lattice, any given element is not necessarily expressed as a unique join of atoms.  For example the join of atoms 1 and 2 may be equal to the join of atoms 1, 2, and 3.  To account for this ambiguity it will be useful for us to introduce the following set associated to every element in the lattice, define $\equivSet_P(p)$ to be the set containing all subsets of the atoms whose join in $P$ is equal to $p$.  

It is now easy to define the Scarf complex of a monomial ideal $M_P$ in terms of the lattice $P$, \[\Scarf(P) = \Gamma (\{p \in P \, | \, | \equivSet_P(p)| = 1 \}) \subset \Gamma(P).\]  Note that because $P$ is an atomic lattice $\Gamma (P)$ is the Taylor Complex associated to $P$.

By results of \cite{bps} and \cite{msy} if a given monomial ideal is generic or strongly generic then its minimal resolution is the Scarf complex.  A strongly generic monomial ideal is one where no variable appears with the same exponent in two or more generators.  The weaker condition of being generic implies that if two generators have the same exponent in a variable that there is a third generator which strictly divides their lcm.  Note however, that there may be monomial ideals whose minimal resolution is the Scarf complex, yet the ideal is not generic.  An obvious example of this phenomenon is if one takes a generic monomial ideal and polarizes to obtain a squarefree monomial ideal.  It will have the same Scarf complex which supports the minimal resolution since LCM lattices are preserved under polarization.  It is rare however, for squarefree monomial ideals to be generic since all variables always appear with the same exponent.  

However there exist examples which are not simply a polarization of a generic monomial ideal, whose minimal resolution is supported by the Scarf complex, but they are not generic or strongly generic.  The following example of an abstract monomial ideal illustrates well an example of a monomial ideal whose minimal resolution is Scarf, but where there is no possible generic coordinatization.  

\begin{example}\label{scarfNotGeneric}
The lattice $P$ in figure \ref{scarfNotgen} is the {\it augmented face lattice} (face poset, which in this case is a meet-semilattice, plus a maximal element) of a simplicial complex consisting of $4$ vertices and $3$ edges.  Every point in $P$ except for the minimal and maximal elements represents a multidegree that has a nonzero betti number.  This is easy to see since for all of the atoms $\tilde{h}_{-1}(\Gamma(P_{<a_i}), k) = 1$ and for each element $p$ covering an atom $\Gamma(P_{<p})$ consists of two vertices thus $\tilde{h}_0 = 1$.  Thus, $P$ will always be resolved by its Scarf complex.

\begin{figure}
\center
\includegraphics[scale=0.5]{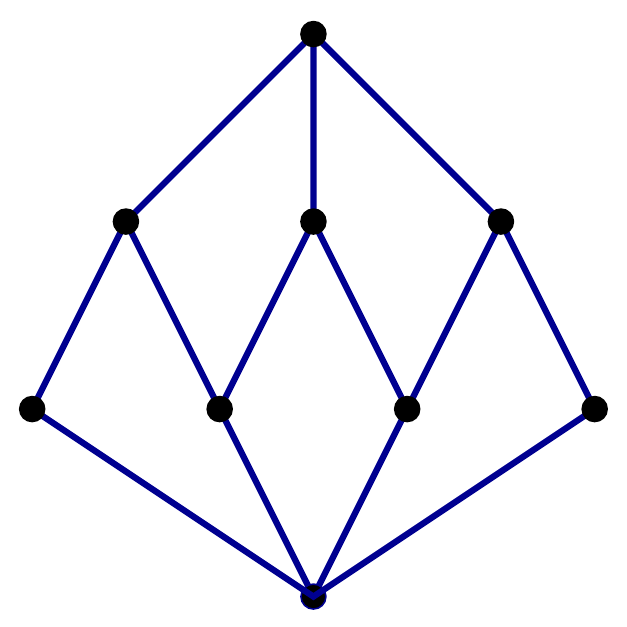}
\caption{Lattice resolved by Scarf complex which has no generic coordinatization}\label{scarfNotgen}
\end{figure}

\noindent Note however, that every possible coordinatization of $P$ fails to satisfy the conditions to be generic.  In other words, that for every coordinatization a variable appears with the same non-zero exponent in either $m_1$ and $m_2$, $m_2$ and $m_3$, or in $m_3$ and $m_4$.  Since  the meet-irreducibles of $P$ are precisely $p_{12} = a_1 \vee a_2$, $p_{23} = a_2 \vee a_3$, $p_{34} = a_3 \vee a_4$, $a_1$, and $a_4$ any coordinatization must cover these.  I will focus on just showing that for all coordinatizations there is a variable appearing with the same non-zero exponent for the pair $m_1$ and $m_2$.  Note that the monomial $m_1$ is determined by the labelings found on $\{p_{23}, p_{34}, a_2, a_3, a_4\}$ and $m_2$ is determined by $\{p_{34}, a_1, a_3, a_4\}$.  Moreover, $p_{34}$ and $a_4$ must be labeled and if $a_3$ happens to be non-trivially labeled it also appears in both of these sets.  Any variable appearing in the label on $p_{34}, a_3,$ and $a_4$ cannot appear anywhere else, thus it must appear with the same exponent in both $m_1$ and $m_2$ and no other generator divides the lcm associated to element $p_{12}$ by construction.  So there is no generic coordinatization of $P$.

\end{example}

Understanding when generic coordinatizations exist must be determined on a case by case basis.  For strongly generic coordinatizations however,  the following theorem characterizes precisely when they exist.  
 
 \begin{theorem}\label{genericCoords}
$P$ in $\mathcal{L}(n)$ admits a strongly generic coordinatization if and only if $P$ is a graded finite atomic lattice whose chains have rank $n$.
\end{theorem}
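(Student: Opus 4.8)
The plan is to reduce \emph{both} conditions to a single combinatorial statement about the covering relations of $P$: \emph{every covering pair $p \lessdot q$ satisfies $|\support(q)| = |\support(p)|+1$}. First I would record the elementary fact that, since $P$ is atomic, $p = \bigvee \support(p)$ for every $p$, so $\support$ is \emph{strictly} order-preserving. Consequently, along any maximal chain $\hat{0} = q_0 \lessdot q_1 \lessdot \cdots \lessdot q_\ell = \hat{1}$ the supports form a strictly increasing chain of subsets from $\emptyset$ to $\atoms(P)$, and $|\atoms(P)| = n$. From this one gets at once: $P$ is graded with all maximal chains of rank $n$ if and only if $p \mapsto |\support(p)|$ is a rank function, i.e.\ iff every cover increases support size by exactly one; and in that case each maximal chain acquires exactly one new atom at each step, inducing a bijection between its $n$ steps and $\atoms(P)$. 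This reformulation is really the whole point; once it is in place, both implications are short.

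For the forward direction I would take a strongly generic monomial ideal $M = (m_1,\dots,m_n)$ with LCM lattice $P$ and show a cover $p \lessdot q$ cannot raise $|\support|$ by two or more. Suppose $i, i'$ are distinct atoms in $\support(q)\setminus\support(p)$. Since $a_i \leqslant q$ and $a_{i'}\leqslant q$ but $a_i, a_{i'}\not\leqslant p$, the elements $p\vee a_i$ and $p\vee a_{i'}$ lie strictly above $p$ and weakly below $q$, so the covering hypothesis forces $p\vee a_i = p\vee a_{i'} = q$; in monomial terms, $\lcm(\overline{p},m_i) = \lcm(\overline{p},m_{i'}) = \overline{q}$. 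As $m_i \nmid \overline{p}$, some variable $x$ occurs in $m_i$ with exponent strictly larger than in $\overline{p}$; this exponent then equals the exponent of $x$ in $\overline{q}$, and reading $\overline{q} = \lcm(\overline{p},m_{i'})$ off in the variable $x$ shows the same positive exponent must be contributed by $m_{i'}$. Thus $x$ appears with the same nonzero exponent in $m_i$ and $m_{i'}$, contradicting strong genericity. Hence $|\support|$ is a rank function and $P$ is graded of rank $n$. I expect this exponent-chase — together with pinning down the reformulation above — to be the crux; note it uses only the definition of strong genericity, not the fact that strongly generic ideals are resolved by their Scarf complexes.

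For the converse, assume $P$ is graded with maximal chains of rank $n$, so every cover adds exactly one atom to the support. I would coordinatize $P$ by the ECCV labeling described earlier — one variable $x_c$ for each maximal chain $c$, with $m_p = \prod_{c \ni p} x_c$ — which is already a coordinatization by Theorem~\ref{coordinatizations}. Strong genericity is then forced by gradedness: for a fixed maximal chain $c = \{\hat{0}=q_0 \lessdot \cdots \lessdot q_n = \hat{1}\}$, the exponent of $x_c$ in the generator $x(a_j) = \prod_{p\,\not\geqslant\, a_j} m_p$ is $|\{q\in c : q\not\geqslant a_j\}|$, which equals the rank of the least element of $c$ lying (weakly) above $a_j$. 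Because $\support$ grows by exactly one atom at each step along $c$, that least element is precisely the first $q_k$ whose support contains $a_j$; as $a_j$ runs over the $n$ atoms these ranks are exactly $1,2,\dots,n$ in some order. So each variable $x_c$ occurs with $n$ distinct nonzero exponents among the generators, and $M_{P,\mathcal{M}}$ is strongly generic. Combining the two directions gives the stated biconditional.
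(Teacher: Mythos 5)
Your proposal is correct, and half of it diverges from the paper in an interesting way. The converse direction (graded of rank $n$ $\Rightarrow$ strongly generic) is essentially the paper's argument: both use the ECCV coordinatization and observe that along a maximal chain the exponent of the chain's variable in the generator $x(a_j)$ is determined by where $\lceil a_j\rceil$ first meets that chain; your reformulation via the support-size rank function (each cover adds exactly one atom, giving a bijection between the $n$ steps of a chain and the $n$ atoms, hence exponents $1,\dots,n$ in some order) actually supplies a cleaner justification of the paper's assertion that the filters $\lceil a_i\rceil$ meet each chain at distinct spots. The other direction is where you genuinely differ. The paper argues globally: for a strongly generic $M$ it takes the generator $m_j$ on which some variable attains its maximal exponent, shows $p=\lcm(m_1,\dots,\hat{m_j},\dots,m_n)$ is a coatom, passes to the strongly generic subideal $M'$ and iterates, producing chains of length $n+1$ and claiming all maximal chains arise this way. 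You instead argue locally: if a cover $p\lessdot q$ added two new atoms $a_i,a_{i'}$ to the support, then $\lcm(\overline{p},m_i)=\lcm(\overline{p},m_{i'})=\overline{q}$ forces a variable to occur with the same nonzero exponent in $m_i$ and $m_{i'}$, contradicting strong genericity; combined with your (correct) observation that gradedness of rank $n$ is equivalent to every cover raising $|\support|$ by exactly one, this finishes the proof. Your local exponent-chase is self-contained and sidesteps the step the paper leaves implicit (that every maximal chain of $\LCM(M)$ is reached by the iterated-deletion process), while the paper's route has the side benefit of exhibiting the coatom structure and an induction on strongly generic subideals. Both are valid; no gaps in yours.
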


\begin{proof}
To show that such graded lattices have strongly generic coordinatizations, coordinatize $P$ using the ECCV coordinatization.  Now, we just need to show that the resulting monomial ideal is strongly generic.  In other words we need to show that for any variable $x_j$, if it appears in any two monomials $m_{a_i}$ and $m_{a_k}$ that it has a different exponent.  By the definition of the monomials $m_{a_i}$ and $m_{a_k}$ this amounts to showing that the intersections of the complements of the filters $\lceil{a_i}\rceil$ and $\lceil{a_k}\rceil$ with the chain corresponding to $x_j$ are different.

Let $c_j = \{\hat{0}, a_{1_j}, a_{1_j} \vee a_{2_j}, \dots, a_{1_j} \vee a_{2_j} \vee \dots \vee a_{n_j} = \hat{1} \}$ be the chain which is entirely labeled by the variable $x_j$.  Since $P$ is graded of rank $n$ we know that there are $n+1$ elements in $c_j$ and that the $i$-th element in the chain is the join of $i-1$ atoms of $P$.  Every set $\lceil{a_i}\rceil$ intersects $c_j$ at a different spot along the chain, so it is likewise with the complements of these filters.  This guarantees that each variable $x_j$ appears with a different exponent in each monomial generator.  

Conversely suppose $M = (m_1,\dots, m_n)$ is a strongly generic monomial ideal. We need to show that its LCM lattice is graded and that the chains have rank n.  Since $M$ is strongly generic then we know that  each variable $x_i$ must achieve its maximum degree $d_i$ for exactly one monomial generator $m_j$.  Let $p = \lcm(m_1,\dots, \hat{m_j},\dots, m_n)$ i.e. omitting generator $j$. Then we know that the degree of $x_i$ in the monomial $p$ must be less than $d_i$.  Thus $p \neq \hat{1}$ in $\LCM(M)$ but $p \wedge a_j = \hat{1}$ where $a_j$ is the atom corresponding to the $j$th generator, so we can conclude that $p$ is covered by $\hat{1}$ and thus is a coatom.

Now, consider $\lfloor p \rfloor \in \LCM(M)$.  This will be $\LCM(M')$ where $M' = (m_1,\dots, \hat{m_j},\dots, m_n)$ which is also a strongly generic monomial ideal.  Repeating the above argument for perhaps a different variable appearing in the generators of $M'$ we can find a $p'$ such that the maximal element of $\LCM(M')$ covers it, i.e. $p$ covers $p'$.  Iterating this process we can find precisely $n$ monomial ideals $M^{(i)}$ so that $M^{(i+1)} \subset M^{(i)}$ and the maximal element in $M^{(i+1)}$ corresponds to a coatom of $M^{(i)}$.  This produces a chain of length $n+1$ connecting $\hat{1}$ to $\hat{0}$ in $\LCM(M)$ and varying over all choices at each iteration we get all chains of $\LCM(M)$.             
\end{proof}

The following statement appeared as a Proposition 5.14 in \cite{phan} and separately as a Theorem 5.3 in \cite{velascoFrames}.  An idea of how to prove this can be summarized as taking the face poset of the simplicial complex, making it a lattice by adding a maximal element (if necessary), and then coordinatizing the lattice.  

\begin{proposition}\label{phanScarf}
Every simplicial complex $X$ not equal to the boundary of a simplex is the Scarf complex of some squarefree monomial ideal.  If $X$ is acyclic then $X$ supports the minimal resolution of that ideal.
\end{proposition}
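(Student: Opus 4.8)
The plan is to follow the strategy sketched before the statement: build a finite atomic lattice out of $X$, coordinatize it by Theorem \ref{coordinatizations} with a squarefree labeling, and then identify the Scarf complex of the resulting ideal with $X$. Write $X$ on vertex set $\{1,\dots,n\}$. If $X$ is a simplex the claim is the classical Koszul/Taylor case, so assume it is not; then let $P$ be the poset consisting of $\hat 0$, a new top $\hat 1$, and the nonempty faces of $X$ ordered by inclusion. Since faces of $X$ are closed under intersection, $P$ is a meet-semilattice, hence a lattice by Proposition \ref{prop:Stanley}; its atoms are the singletons, every face is the join of its vertices, and $\hat 1$ is the join of all atoms, so $P$ is a finite atomic lattice with $n$ atoms. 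Coordinatize $P$ with the \emph{minimal} labeling: assign to each $p\in\mi(P)$ its own distinct variable $x_p$ and leave all other elements unlabeled. Both hypotheses of Theorem \ref{coordinatizations} hold trivially (every meet-irreducible is labeled, and distinct labels are pairwise coprime, so the comparability condition is vacuous), so this is a coordinatization; call the resulting ideal $M$. Each generator $x(a_i)$ is a product of distinct variables, so $M$ is squarefree, and Theorem \ref{coordinatizations} identifies the LCM lattice of $M$ with $P$, whence $\Scarf(M)=\Scarf(P)$.

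Next I would compute $\Scarf(P)$. For a set $A$ of atoms one checks directly that $\bigvee_P A=\bigcup A$ when $\bigcup A$ is a face of $X$ (and then $A$ is the only atom set with that join, since any smaller union would be a smaller face above $A$), whereas $\bigvee_P A=\hat 1$ when $\bigcup A$ is a non-face (no face of $X$ contains all of $A$). Hence $\equivSet_P(\sigma)$ is a singleton for every face $\sigma$ of $X$, while $\equivSet_P(\hat 1)=\{A\,:\,\bigcup A\notin X\}$; the latter has two or more elements exactly when $X$ has a non-face other than $\{1,\dots,n\}$, i.e.\ exactly when $X\neq\partial\Delta^{n-1}$. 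So $\{p\in P:|\equivSet_P(p)|=1\}$ is precisely $\hat 0$ together with the faces of $X$, and therefore $\Scarf(P)=\Gamma(\{p:|\equivSet_P(p)|=1\})$ --- whose faces are the atom sets $A$ with $|\equivSet_P(\bigvee A)|=1$ --- is exactly $X$. This proves the first assertion.

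For the second assertion, assume $X$ is acyclic over $k$. I would compute the minimal multigraded Betti numbers of $R/M$ using $b_{i,p}(R/M)=\dim\tilde H_{i-2}(\Delta(\hat 0,p);k)$. For a face $\sigma$ with $|\sigma|=j$ the open interval $(\hat 0,\sigma)$ in $P$ is the face poset of $\partial\Delta^{\,j-1}$, so $\Delta(\hat 0,\sigma)\simeq S^{\,j-2}$ and $b_{i,\sigma}=1$ if $i=j$ and $0$ otherwise; for $p=\hat 1$ the interval $(\hat 0,\hat 1)$ is the face poset of $X$, so $\Delta(\hat 0,\hat 1)\simeq X$ is acyclic and $b_{i,\hat 1}=0$ for all $i$. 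Thus the $i$-th total Betti number of $R/M$ equals the number of $(i-1)$-faces of $X$, with the expected multidegrees. On the other hand $X=\Scarf(P)$ supports a cellular free resolution of $R/M$: by the acyclicity criterion for cellular resolutions in \cite{bps} it suffices that the subcomplex $X_{\le p}$ of faces whose $P$-join is $\le p$ be acyclic for each $p\in P$, and indeed $X_{\le\sigma}$ is the full simplex on the vertices of $\sigma$ (contractible), $X_{\le\hat 0}=\{\emptyset\}$, and $X_{\le\hat 1}=X$ is acyclic by hypothesis. This resolution has exactly one free summand per face of $X$, matching the Betti numbers just computed, so it is the minimal free resolution.

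The step I expect to be the main obstacle is the Scarf computation in the second paragraph: recognizing that excluding $\partial\Delta^{n-1}$ is precisely what forces $\hat 1$ --- and with it every ``phantom'' simplex whose vertex set is a non-face of $X$ --- out of $\Scarf(P)$, so that $\Scarf(P)$ collapses exactly onto the face poset of $X$. Once that bookkeeping with $\equivSet_P$ is in hand, the remaining steps are routine applications of Proposition \ref{prop:Stanley}, Theorem \ref{coordinatizations}, the Betti-number formula, and the standard cellular acyclicity criterion.
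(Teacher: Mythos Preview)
Your argument is correct and is exactly the route the paper intends: it does not give a proof of this proposition but only the one-line sketch ``take the face poset of $X$, add a maximal element if necessary, and coordinatize'', citing \cite{phan} and \cite{velascoFrames} for the details. Your write-up fleshes out precisely that sketch---the augmented face lattice, the minimal squarefree coordinatization via Theorem~\ref{coordinatizations}, the $\equivSet_P$ bookkeeping that pins down why $\partial\Delta^{n-1}$ must be excluded, and the Betti/acyclicity check for minimality---so there is nothing to compare; you have simply supplied the omitted proof along the indicated lines.
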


The intention of the next proposition is to demonstrate that each strata in $\mathcal{L}(n)$ with fixed total Betti numbers has entire regions of lattices whose minimal resolution is the supported by the appropriate Scarf complex.   

\begin{proposition}\label{simplicialFilter}
Suppose $P$ is minimally resolved by its Scarf complex.   If $Q$ is such that  $Q \geqslant P$ in $\mathcal{L}(n)$ and the total Betti numbers of $Q$ are the same as the total Betti numbers for $P$, then $Q$ is also resolved by its Scarf complex which will equal the Scarf complex of $P$.
\end{proposition}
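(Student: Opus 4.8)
The plan is to prove the stronger statement $\Scarf(Q)=\Scarf(P)$ and then read off that this common complex minimally resolves $Q$; the whole argument is a counting squeeze driven by the equality of total Betti numbers. Recall that for any finite atomic lattice $L$ the faces of $\Scarf(L)$ are exactly the sets of atoms $\sigma$ with $\equivSet_L(\bigvee\sigma)=\{\sigma\}$, each sitting in the distinct multidegree $\bigvee\sigma$. I would open with two observations. First, since $P$ is minimally resolved by $\Scarf(P)$, its $i$-th total Betti number $b_i(P)$ equals the number $f_{i-1}(\Scarf(P))$ of $(i-1)$-faces of $\Scarf(P)$, with the usual convention $f_{-1}=1$. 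Second, for every finite atomic lattice $L$ the algebraic Scarf complex is a subcomplex of the minimal free resolution of $R/M_L$ (see \cite{bps}), so $f_{i-1}(\Scarf(L))\leqslant b_i(L)$ for all $i$, with equality in all homological degrees if and only if $L$ is minimally resolved by $\Scarf(L)$.

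Next I would establish the containment $\Scarf(P)\subseteq\Scarf(Q)$. Since $Q\geqslant P$ there is a join-preserving surjection $g\colon Q\to P$ which is a bijection on atoms, which we regard as the identity on atoms, so that $g(\bigvee_Q\sigma)=\bigvee_P\sigma$ for every set of atoms $\sigma$. Now take a face $\sigma$ of $\Scarf(P)$, meaning $\equivSet_P(\bigvee_P\sigma)=\{\sigma\}$, and suppose some set of atoms $\tau$ satisfies $\bigvee_Q\tau=\bigvee_Q\sigma$; applying $g$ gives $\bigvee_P\tau=\bigvee_P\sigma$, hence $\tau\in\equivSet_P(\bigvee_P\sigma)=\{\sigma\}$, so $\tau=\sigma$. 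Thus $\equivSet_Q(\bigvee_Q\sigma)=\{\sigma\}$, i.e.\ $\sigma$ is a face of $\Scarf(Q)$, and in particular $f_{i-1}(\Scarf(P))\leqslant f_{i-1}(\Scarf(Q))$ for every $i$.

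Then the proof finishes by stringing these together: for every $i$,
\[
f_{i-1}(\Scarf(P)) \;=\; b_i(P) \;=\; b_i(Q) \;\geqslant\; f_{i-1}(\Scarf(Q)) \;\geqslant\; f_{i-1}(\Scarf(P)),
\]
using in order that $P$ is Scarf-resolved, the hypothesis that $P$ and $Q$ have the same total Betti numbers, the general Scarf-in-minimal-resolution containment, and the containment $\Scarf(P)\subseteq\Scarf(Q)$. Hence every inequality is an equality. From $f_{i-1}(\Scarf(Q))=b_i(Q)$ for all $i$ and the criterion recalled above, $Q$ is minimally resolved by $\Scarf(Q)$; and from $f_{i-1}(\Scarf(P))=f_{i-1}(\Scarf(Q))$ together with $\Scarf(P)\subseteq\Scarf(Q)$ we get $\Scarf(P)=\Scarf(Q)$, which is the assertion.

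The one place I expect to need care is the second observation in the first paragraph: the fact that the algebraic Scarf complex is always a subcomplex of the minimal free resolution, so that ``the right number of faces in every degree'' already forces ``minimally resolved by the Scarf complex''. If one prefers to stay purely lattice-theoretic, this can be replaced by the formula $b_{i,q}(R/M)=\dim\tilde{H}_{i-2}(\Delta(\hat 0,q);k)$ from this section: once the face counts above are pinned down, one checks that any multidegree $q\in Q$ with $b_{i,q}(Q)\neq 0$ must have $|\equivSet_Q(q)|=1$, which is precisely the condition for the minimal resolution of $Q$ to be supported on $\Scarf(Q)$. Everything else is a short manipulation with the map $g$ and the definition of $\equivSet$.
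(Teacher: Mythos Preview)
Your proof is correct and is a genuinely different, more streamlined argument than the paper's. The paper proceeds by direct inspection of multidegrees: it uses the identity $\equivSet_P(p)=\bigcup_{q\in\psi^{-1}(p)}\equivSet_Q(q)$ to pin down, for each Scarf multidegree $p\in P$, a unique $q\in\psi^{-1}(p)$, and then argues inductively that $\Gamma(Q_{<q})=\Gamma(P_{<p})$ as simplicial complexes, so the relevant reduced homology groups coincide; the equality of total Betti numbers is invoked only at the end to rule out extra contributions from other $q$. Your approach instead pivots on the external fact from \cite{bps} that the algebraic Scarf complex is always a direct summand of the minimal free resolution, turning the problem into a pure counting squeeze. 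This buys you a cleaner proof with no homology computation and no induction over faces, at the cost of importing a (standard) result not proved in the present paper. The paper's approach is more self-contained and makes explicit why the open intervals in $Q$ look exactly like those in $P$ below Scarf multidegrees, which is conceptually useful for the later Proposition \ref{GenSimpFilt}; your approach would generalize less readily to that non-simplicial setting, since the ``Scarf is always a subcomplex'' fact has no direct analogue there.
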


\begin{proof} 
If $P$ is minimally resolved by its Scarf complex, then $\beta_{i,p} \neq 0$ implies that $\beta_{i,p} = \tilde{h}_{i-2}(\Gamma(P_{<p}, k)) = 1$ and $| \equivSet_P(p)| = 1$.  So we need to show the same is true for the appropriate elements $q \in Q$.   Since $Q \geqslant P$ there is a join preserving map $\psi: Q \rightarrow P$ and recall that \[\equivSet_P(p) = \bigcup_{q \in \psi^{-1}(p)} \equivSet_Q(q).\]  Since any $p \in P$ contributing to a Betti number satisfies $| \equivSet_P(p)| = 1$ we see that there must be only one $q \in \psi^{-1}(p)$.  It remains to show that for this $q = \psi^{-1}(p)$ we have $\tilde{h}_{i-2}(\Gamma(Q_{<q}, k)) = 1$.

To see this last fact, consider the fact that since $P$ is resolved by its Scarf complex we can inductively apply the above argument to every single face of $\Gamma(P_{<p})$ to see that $\Gamma(Q_{<q})$ is precisely the same simplicial complex.   Thus it has the same reduced homology. 

Clearly no other elements of $Q$ have nonzero Betti numbers since $Q$ is assumed to be in the same strata of total Betti numbers as P.  Thus, $\Scarf(P) = \Scarf(Q)$ and both are minimally resolved by their Scarf complexes.   

\end{proof}

We can also begin to generalize the previous results to account for ideals whose minimal resolutions will not be supported on a simplicial complex.  Then the following propositions provide a generalization of Proposition \ref{phanScarf} and Proposition \ref{simplicialFilter}.

\begin{proposition}\label{generalizationScarf}
Let $X$ be a regular cell complex such that $X$ is acyclic, and the augmented face poset $P_X$ of $X$ is a finite atomic lattice on $|X^0| = n$ atoms (i.e. the face poset is a meet-semilattice with $n$ atoms).  Then the minimal resolution of any coordinatization of $P$ is supported on $X$.
\end{proposition}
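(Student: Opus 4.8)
The plan is to show that for \emph{every} coordinatization $M = M_{P,\mathcal{M}}$ of $P = P_X$, the labeled cellular chain complex of $X$ is the minimal free resolution of $R/M$; since minimal resolutions are unique, this is exactly the assertion. Throughout I identify the cells of $X$ with the elements of $P_X\setminus\{\hat 0\}$ via the face poset, and I fix the lattice isomorphism $f\colon P \xrightarrow{\ \sim\ } \LCM(M)$ furnished by Theorem \ref{coordinatizations}.

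\emph{Step 1: the labeling.} Label each cell $\sigma$ of $X$ by the monomial $f(\sigma)$. Because $P_X$ is atomic we have $\sigma = \bigvee \support(\sigma)$, and the identity $f(\sigma) = \lcm\{\,f(a_i)\mid a_i \in \support(\sigma)\,\}$ established inside the proof of Theorem \ref{coordinatizations} shows that $f(\sigma)$ is precisely the least common multiple of the vertex labels $x(a_i)$ of $\sigma$. Thus $X$ becomes a monomial-labeled regular cell complex in the usual sense, and its cellular chain complex $\mathcal{F}_X$ (with free generators in homological degree $i$ indexed by the $(i-1)$-cells of $X$, and $\mathcal{F}_0 = R$) is the candidate resolution of $R/M$.

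\emph{Step 2: $\mathcal{F}_X$ is a resolution.} By the standard acyclicity criterion for cellular resolutions (cf.\ \cite{bps}), $\mathcal{F}_X$ resolves $R/M$ if and only if for every monomial $m$ the subcomplex $X_{\preceq m}$, consisting of those cells $\sigma$ of $X$ with $f(\sigma)$ dividing $m$, is empty or $k$-acyclic. Since $\LCM(M)$ is closed under $\lcm$ and every face label $f(\sigma)$ is itself an $\lcm$ of generators of $M$, it suffices to check this when $m = f(p)$ for some $p \in P$. Now $f$ is a lattice isomorphism: order-preservation together with the injectivity argument from the proof of Theorem \ref{coordinatizations} gives $f(\sigma)\mid f(p) \iff \sigma \leqslant p$ in $P_X$. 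Hence $X_{\preceq f(p)}$ is exactly the set of cells of $X$ lying at or below $p$ in the face poset. If $p$ corresponds to an honest cell of $X$, this subcomplex is the closed cell $\overline{e_p}$, which is a topological ball since $X$ is regular, hence acyclic; if $p = \hat 1$ is the element adjoined to form the augmented face lattice, the subcomplex is all of $X$, acyclic by hypothesis. Either way $X_{\preceq f(p)}$ is acyclic, so $\mathcal{F}_X$ is a free resolution of $R/M$ supported on $X$.

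\emph{Step 3: minimality.} A cellular resolution supported on $X$ is minimal precisely when any two cells $\sigma \lessdot \tau$ related by a covering relation of the face poset carry distinct monomial labels, so that the corresponding entry of the differential lies in the maximal ideal. Since $f$ is a bijection and $\sigma \neq \tau$, automatically $f(\sigma) \neq f(\tau)$; hence all covering pairs have distinct labels and $\mathcal{F}_X$ is minimal. This finishes the argument. The step I expect to demand the most care is Step 2: one must check that $X_{\preceq m}$ is genuinely a CW-subcomplex (it is, being an order ideal in the face poset) and then identify its homotopy type, and this is exactly where the two hypotheses are spent — regularity of $X$ so that each closed cell $\overline{e_p}$ is a ball (and so that $\mathcal{F}_X$ computes reduced cellular homology in the first place), and the atomic-lattice hypothesis on $P_X$ so that a cell is determined by its vertex set, which is what makes the labeling of Step 1 well defined. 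Degenerate cases, such as $m = 1$ with $X_{\preceq m} = \varnothing$, are harmless.
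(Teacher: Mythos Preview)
Your proof is correct and follows essentially the same route as the paper: identify $X_{\preceq f(p)}$ with the closed cell $\overline{e_p}$ (a ball, by regularity) or with all of $X$ (acyclic by hypothesis) to verify the cellular acyclicity criterion, and then check minimality. The only cosmetic difference is in the minimality step---the paper computes the multigraded Betti numbers directly via the GPW formula and the observation that $\Delta((P_X)_{<p})$ is the barycentric subdivision of $\partial\overline{e_p}$, whereas you invoke the equivalent ``distinct labels on covering pairs'' criterion using injectivity of $f$; both are standard and immediate here.
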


\begin{proof}
Observe that if $P_X$ is the face lattice of $X$ then labeling $X$ with the monomials in any coordinatization of $P_X$ as prescribed in \cite{bs} simply puts the monomial at a point $p \in P_X$ on its corresponding face in $X$.  Moreover, each face in $X$ has a distinct multidegree labeling it.  To show that the resolution of any coordinatization is supported on $X$ we simply need to show that $X_{\leqslant p}$ is acyclic.  This is true by construction though since $X_{\leqslant p}$ corresponds to the $d$-cell that $p$ represents and its boundary.  

Note also that $\Delta({P_X}_{<p})$ is the barycentric subdivision of $X_{<p}$.  In particular, we can easily see that $\tilde{h}_i (\Delta({P_X}_{<p},k) = 1$ for $i = d-1$ where $p$ corresponds to a $d$-cell in $X$ since $X_{<p}$ is the boundary of that $d$-cell.  Thus $\beta_{d+1,p} =1$ for a $d$-cell $F_p$ in $X$.  

If $P$ is the augmented face lattice of $X$ (i.e. if the $\hat{1}$ element actually needs to be added in) then the above description applies to all $p \in P-\hat{1}$.  For $p = \hat{1}$, we have by construction that $X_{< p} = X$ and since $X$ was assumed to be acyclic, $X$ still supports the resolution.   

Moreover, only the multidegrees corresponding to each $p \in P$ can possibly have $\beta_{i,p}$ nonzero so since we've considered all such $p$'s, $X$ supports the resolution of $P$.  This resolution is minimal since $\beta_{i,p} = 1$ for only one $i$, i.e. no map has an integer as an entry of the matrix.
\end{proof}

The following result is a partial generalization of \ref{simplicialFilter}.

\begin{proposition}\label{GenSimpFilt}
Let $P_X$ be as in proposition \ref{generalizationScarf}. Suppose that the total Betti numbers of $Q$ are the same as the total Betti numbers for $P_X$  and that $Q$ satisfies the following two conditions:
\begin{enumerate}
\item $Q$ covers $P_X$
\item $\beta_{i,q} = 1$ for $q = \max (\psi^{-1}(p))$ for any $p \in P_X$ where $\psi$ is the join-preserving map from $Q$ to $P_X$, and $\beta_{i,q} = 0$ otherwise.
\end{enumerate}
Then $Q$ has a minimal resolution supported on $X$.
\end{proposition}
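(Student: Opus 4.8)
The plan is to imitate the proof of Proposition~\ref{generalizationScarf}: I will label the cells of $X$ by the monomials of a coordinatization of $Q$ and verify the acyclicity criterion for cellular resolutions from \cite{bs}, the only genuinely new feature being the single ``extra'' element that $Q$ has over $P_X$.

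First I would record the shape of $Q$. Since $Q$ covers $P_X$, Proposition~\ref{coveringRelations} gives $\mathcal{S}_Q = \mathcal{S}_{P_X} \cup \{\sigma\}$ for one subset $\sigma \subseteq \{1,\dots,n\}$ with $|\sigma| \geqslant 2$ and $\sigma \notin \mathcal{S}_{P_X}$. With the atoms held fixed, the join-preserving map $\psi\colon Q \to P_X$ is forced: it sends $s \in \mathcal{S}_Q$ to the smallest member of $\mathcal{S}_{P_X}$ containing $s$. In particular $p$ lies in $\psi^{-1}(p)$ and is its largest element, so $\max(\psi^{-1}(p)) = p$ for every $p \in P_X$; hence $\sigma$ is never of this form, and condition (2) forces $\beta_{i,\sigma}=0$ for all $i$.

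Next I would fix any coordinatization of $Q$ (for instance the ECCV labelling of Section~\ref{coord}), obtaining a monomial ideal $M_Q$ with $\LCM(M_Q)=Q$; write $\overline{q}$ for the monomial attached to $q\in Q$. The cells of $X$ are indexed by $\mathcal{S}_{P_X}$ (omitting $\hat{1}$ if it was adjoined, with the empty face corresponding to $\hat{0}$), and I label the cell $F_p$ by $\overline{p}$, regarding $p\in\mathcal{S}_{P_X}\subseteq\mathcal{S}_Q$. This is a legitimate monomial labelling of $X$ in the sense of \cite{bs}: the vertices receive the generators of $M_Q$, and $F_{p'}\subseteq F_p$ forces $p'\subseteq p$, hence $\overline{p'}\mid\overline{p}$. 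By the acyclicity criterion it then suffices to check that $X_{\leqslant \mathbf{b}}$ is acyclic (or empty) over $k$ for every monomial $\mathbf{b}$. Setting $\gamma = \{i : x(a_i)\mid\mathbf{b}\}$, one sees that $\gamma$ is the support of the lcm of the generators dividing $\mathbf{b}$, so $\gamma\in\mathcal{S}_Q$, and $X_{\leqslant\mathbf{b}} = \{F_p : p\in\mathcal{S}_{P_X},\ p\subseteq\gamma\}$. If $\gamma\in\mathcal{S}_{P_X}$, this equals $\{F_p : p\leqslant\gamma$ in $P_X\}$, which is either a closed cell of $X$ (a ball) or all of $X$, hence acyclic just as in Proposition~\ref{generalizationScarf}. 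If $\gamma=\sigma$, then $X_{\leqslant\mathbf{b}}$ is the subcomplex $Y$ of $X$ made up of the cells whose vertex set lies in $\sigma$; its poset of nonempty cells is $\{r\in\mathcal{S}_{P_X} : \emptyset\subsetneq r\subsetneq\sigma\}$, which is precisely the open interval $(\hat{0},\sigma)$ of $Q$. Since $Y$ is a regular CW-complex, its barycentric subdivision is $\Delta(\hat{0},\sigma)$, so $\tilde{H}_j(Y;k)\cong\tilde{H}_j(\Delta(\hat{0},\sigma);k)$, which has dimension $\beta_{j+2,\sigma}=0$ by the previous paragraph; hence $Y$ is acyclic too.

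It follows that $X$ with this labelling supports a free resolution of $R/M_Q$, and the resolution is minimal because distinct cells carry distinct monomial labels (the map $p\mapsto\overline{p}$ is injective on $\mathcal{S}_Q$), so every entry of the differential lies in the maximal ideal; consequently this is the minimal free resolution of $M_Q$, with $\beta_{d+1,\overline{p}}=1$ for each $d$-cell $F_p$. The only step where real work happens — and the only place conditions (1) and (2) are used — is the case $\gamma=\sigma$: condition (1) is what guarantees there is just one new ``active'' subset $\sigma$ to analyze, and recognizing the resulting subcomplex as the open interval $(\hat{0},\sigma)$ of $Q$ is what lets condition (2), i.e.\ $\beta_{i,\sigma}=0$, finish the argument. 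I expect the fussiest technical point to be exactly that bookkeeping: identifying $X_{\leqslant\mathbf{b}}$ with an induced subcomplex and its face poset with $(\hat{0},\sigma)$; everything else runs parallel to Proposition~\ref{generalizationScarf}.
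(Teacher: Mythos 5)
Your proposal is correct and follows essentially the same route as the paper: label the cells of $X$ by the elements of $Q$ lying in $\mathcal{S}_{P_X}\subseteq\mathcal{S}_Q$, note that the only genuinely new multidegree to check is the single extra element $\sigma$ coming from the covering relation, and identify the corresponding subcomplex $X_{\leqslant\sigma}$ (via barycentric subdivision) with $\Delta\bigl((\hat{0},\sigma)_Q\bigr)$, whose acyclicity is exactly $\beta_{i,\sigma}=0$ from condition (2). The only cosmetic differences are that you verify the acyclicity criterion over all monomials $\mathbf{b}$ and reduce to $\gamma\in\mathcal{S}_Q$ explicitly, and you get minimality from distinct cell labels rather than from the paper's appeal to the matching total Betti numbers; both are fine.
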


\begin{proof}
Since $Q \in \phi^{-1}({\bf b}_{P_X})$ we know that the total Betti numbers of the minimal resolution of $Q$ are the same as those of $P_X$.  So, all that needs to be shown is that $X$ supports a resolution of $Q$, and then since that resolution has the right total Betti numbers it must be minimal.  Thus, we just need to show that $X_{\leqslant q}$ is acyclic for all $q \in Q$.

One can observe that \[\equivSet_{P_X}(p) = \bigcup_{q \in \psi^{-1}(p)} \equivSet_Q(q). \]   Moreover, $\support(p)$ is the maximal element in $\equivSet_{P_X}(p)$ when it is ordered by inclusion which implies that $\support(p)$ is the maximal element in $\bigcup_{q \in \psi^{-1}(p)} \equivSet_Q(q)$.  So we can observe that there are two types of elements $q \in Q$:  one being where  \[ q = \bigvee_{i \in \support(p)} a_i \] for some $p \in P_X$ where the join is in $Q$;  and the second being the $q \in Q$ that are not of the first type.  Clearly the elements of type one are maximal in the appropriate $\phi^{-1}(p)$.  Note also that since $Q$ covers $P_X$ there is precisely one element of type two in $Q$ due to Proposition \ref{coveringRelations}.  

Now we need to label $X$ with the appropriate multidegrees.  Notice that each face of $X$ corresponding to $p \in P_X$ will be labeled with the appropriate $q \in Q$ of type one which is the maximal element in $\phi^{-1}(p)$.  Moreover, no elements in $Q$ of type two label any faces in $X$.  Thus when we examine the complexes $X_{\leqslant q}$ for $q$ of type one they will all be acyclic for the same reasons as in the proof of Proposition \ref{generalizationScarf}.

It remains to show that for the one element $q \in Q$ of type two we have that $X_{\leqslant q}$ is acyclic.  We know that since $\beta_{i,q} = 0$ for this element, $\Delta(Q_{<q})$ is acyclic.  Moreover, $q$ is the only element in $Q$ which does not correspond to a face in $X$, so $Q_{<q}$ is equal to the face lattice of $X_{<q}$.  This shows that $X_{<q}$ is acyclic since $\Delta(Q_{<q})$ is homotopy equivalent to $X_{<q}$ by barycentric subdivision.  Finally, since $q$ does not label a face of $X$, it is clear that $X_{<q} = X_{\leqslant q}$ thus concluding the proof.   

\end{proof}

Both Proposition \ref{phanScarf} and Proposition \ref{generalizationScarf} are instances of statements saying ``if a geometric object $X$ has a certain property then there is a monomial ideal whose resolution is supported on $X$''.  We believe that these propositions merely begin to give a description of what types of cell complexes can support resolutions, and that there are more statements of this type that exist with different conditions on the geometric object.

Moreover, both Proposition \ref{simplicialFilter} and Proposition \ref{GenSimpFilt} are of the form ``if a lattice $P$ has resolutions with some property then all the lattices above it with the same total Betti numbers also have the same property''.  Statements of this type are pleasantly surprising since it is known that if $Q >P$ then minimal resolutions of $Q$ will be resolutions of $P$ thus certain properties of a minimal resolution of $Q$ descend to  properties of some resolution of $P$.  Our statements however say the opposite, that certain properties of a minimal resolution of $P$ can be lifted to a some extent in $\mathcal{L}(n)$ and we believe that these are instances of a much stronger result.  This is the subject of ongoing work.


\end{document}